\documentclass[twoside,11pt]{article}

\usepackage[margin=1in]{geometry}
\usepackage{mathptmx} 
\usepackage{authblk} 
\usepackage{setspace} 
\usepackage{parskip} 
\usepackage{fancyhdr} 
\usepackage{lastpage} 
\usepackage{amsmath,amssymb,amsthm}
\usepackage{bm} 
\usepackage{mathtools} 
\usepackage{physics} 
\usepackage{algorithm} 
\usepackage{algpseudocode}

\theoremstyle{plain}
\newtheorem{theorem}{Theorem}[section]
\newtheorem{lemma}[theorem]{Lemma}

\theoremstyle{definition}
\newtheorem{definition}[theorem]{Definition}

\newtheorem{example}[theorem]{Example}
\newtheorem{remark}[theorem]{Remark}

\usepackage{graphicx}
\usepackage{subcaption} 
\usepackage{booktabs} 
\usepackage{multirow} 
\usepackage{multicol} 
\usepackage{longtable} 
\usepackage{rotating} 
\usepackage{caption} 
\usepackage{float} 
\graphicspath{{figures/}} 
\usepackage[
  backend=biber,       
  sorting=nty,          
  maxnames=3,           
  minnames=1,           
  maxbibnames=5,        
  minbibnames=3,       
  doi=true,             
  isbn=false,           
  url=false,            
  eprint=false          
]{biblatex}
\DeclareFieldFormat{journaltitle}{\textit{#1}}  
\DeclareFieldFormat{booktitle}{\textit{#1}}    
\DeclareFieldFormat{title}{#1.} 

\usepackage{lineno}

\newcommand{\keywords}[1]{\noindent\textbf{Keywords:} #1}

\usepackage[
  colorlinks=true,
  linkcolor=blue,
  citecolor=blue,
  urlcolor=blue,
  pdfauthor={Yiyuan Wang},
  pdftitle={Reinforcement Learning Method for Zero-Sum Linear-Quadratic Stochastic Differential Games in Infinite Horizons},
  pdfsubject={Reinforcement Learning, Zero-Sum Stochastic Differential Games},
  pdfkeywords={Reinforcement Learning, Zero-Sum Stochastic Differential Games}
]{hyperref}
\usepackage{doi} 
\usepackage{url}

\title{Reinforcement Learning Method for Zero-Sum Linear-Quadratic Stochastic Differential Games in Infinite Horizons}
\author{Yiyuan Wang \thanks{Yiyuan Wang is at the Shandong University Zhongtai Securities Institute for Financial Studies, Shandong University, 27 Shanda Nanlu, Jinan, P.R. China, 250100 (Email: wangyiyuan@mail.sdu.edu.cn).}}

\date{}

\begin{document}
\maketitle

\begin{abstract}
In this work, we propose, for the first time, a reinforcement learning framework specifically designed for zero-sum linear-quadratic stochastic differential games. This approach offers a generalized solution for scenarios in which accurate system parameters are difficult to obtain, thereby overcoming a key limitation of traditional iterative methods that rely on complete system information. In correspondence with the game-theoretic algebraic Riccati equations associated with the problem, we develop both semi-model-based and model-free reinforcement learning algorithms by combining an iterative solution scheme with dynamic programming principles. Notably, under appropriate rank conditions on data sampling, the convergence of the proposed algorithms is rigorously established through theoretical analysis. Finally, numerical simulations are conducted to verify the effectiveness and feasibility of the proposed method.\\
\vspace{0.5em}
\keywords{Reinforcement Learning, Zero-Sum Stochastic Differential Games, Stabilizing Solution, Game-Theoretic Algebraic Riccati Equations} \\
\end{abstract}

\section{Introduction} 
\label{sec:introduction}
\quad

The vigorous development of artificial intelligence technology has profoundly reshaped the research paradigms of various disciplines. As a highly representative branch thereof, reinforcement learning (RL) has emerged as a pivotal tool for addressing complex dynamic decision-making problems by virtue of its core mechanisms of trial-and-error exploration and delayed reward optimization, garnering extensive attention from the scientific and engineering communities for a long time. See \cite{Minsky1954,Mendel1970,Sutton2018,Fayaz2022} for details. In the field of control science, RL algorithms have been widely applied to the optimal control of both deterministic and stochastic control systems, offering novel insights into resolving longstanding challenges in traditional control theory such as model unknowns and high-dimensional state spaces. For the latest advances in continuous-time control systems, refer to \cite{Wang2020,Jia2022,Li2022,Pang2023,Xu2023,Liang2024,Ming2024}.

As an important research direction in the field of stochastic control, zero-sum stochastic differential games (ZSSDG) serve as a fundamental theoretical framework for characterizing the adversarial behaviors of agents in continuous time, and have been extensively adopted for the modeling and analysis of various complex engineering scenarios. See \cite{Bacsar2018} for details. Zero-sum linear-quadratic stochastic differential games (ZSLQSDG) constitute a core category of problems in the theory and practice of ZSSDG, as a large number of nonlinear problems can be approximately solved via linear models, relevant theoretical advances can be found in \cite{Mou2006,Sun2014,Yu2015,Sun2016,Sun2021,Zhou2022,Zhang2024,Sun2024}. The linear quadratic approximation assumption for system dynamics and performance indices provides a direct and effective research perspective for an intuitive understanding of ZSSDG mechanisms among agents and the exploration of relevant computational methods, refer to \cite{Wang2025}.

RL algorithms developed on the basis of dynamic programming (DP) constitute an important technical approach and thus provide a key tool for addressing such problems, refer to \cite{Sutton2002, Lewis2013}. Specifically, \cite{Vrabie2011} present an approximate/adaptive dynamic programming (ADP) algorithm that adopts the idea of integral reinforcement learning to online determine the Nash equilibrium solution for the two-player zero-sum differential game with linear dynamics and infinite-horizon quadratic cost. \cite{Wu2013} present a simultaneous policy update algorithms for learning the solution of linear continuous-time $H_\infty$ state feedback control. Subsequently, \cite{Li2014} develop an integral reinforcement learning algorithm based on policy iteration to online learn the Nash equilibrium solution for a two-player zero-sum differential game with completely unknown linear continuous-time dynamics. \cite{Chen2024} propose two online policy-iteration algorithms for solving linear continuous-time $H_\infty$ regulation problems with unknown dynamics. However, all the aforementioned ADP algorithms for linear quadratic differential game (LQDG) problems are designed for deterministic systems, and research on problems involving continuous-time stochastic systems remains relatively scarce. At present, for the complex scenario where noise terms are affected by single control inputs, \cite{Sun2026} proposed RL-based solutions for $H_\infty$ control problems—a subclass of the ZSLQSDG problem. 

ADP for continuous-time systems entails greater inherent challenges compared with discrete-time systems, implying that numerous ADP algorithms developed for discrete-time systems cannot be directly extended to the continuous-time setting. The core of the ZSLQSDG problem consists in solving closed-loop strategy pairs that satisfy the Nash equilibrium, which inevitably requires tackling highly coupled nonlinear game-theoretic algebraic Riccati equations (GTARE). For the more complex coupled case where noise terms are simultaneously affected by the control inputs of both players, a recent work by \cite{Wang2025} has introduced an iterative framework to address this problem. However, conventional iterative approaches become largely infeasible when complete knowledge of system parameters is unavailable.

This paper develops, for the first time, a comprehensive RL algorithmic framework specifically designed for the ZSLQSDG problem, and provides a unified, general solution applicable to scenarios where accurate system parameter information is difficult to acquire. In particular, customized RL algorithms are constructed to handle distinct cases corresponding to different degrees of prior knowledge about system dynamics. Most importantly, under the condition that data sampling satisfies the required rank condition, this paper rigorously derives and proves the convergence of the proposed algorithms, thus filling the theoretical gap in the convergence analysis of RL methods for ZSLQSDG problems.

The remainder of this paper is structured as follows. Section \ref{sec:preliminary} introduces the mathematical formulation of the ZSLQSDG problem and related preliminary background. Section \ref{sec:RL} elaborates on the design of on-policy and off-policy semi-model-based RL algorithms, as well as fully model-free RL algorithms, all established within the dynamic programming paradigm. Section \ref{sec:ConvergenceAnalysis} presents a rigorous convergence analysis. Finally, Section \ref{sec:simulation} provides a detailed numerical simulation example to validate the effectiveness of the proposed approach.

\section{Preliminary} 
\label{sec:preliminary}

\subsection{Notation}
Let us first introduce the following notation:
\begin{itemize}
    \item $\mathbb{R}^n$: $n$-dimensional real Euclidean space; $\mathbb{C}^{-}$: the set of complex numbers with negative real part; $\mathbb{R}^{n\times m}$: the space of $n\times m$ real matrices; $\mathbb{S}^n$: the set of all $n\times n$ symmetric matrices; $\overline{\mathbb{S}}_{+}^n$: the set of all $n\times n$ symmetric positive semi-definite matrices; $\mathbb{S}_{+}^n$: the set of all $n\times n$ symmetric positive definite matrices. 
    \item $I_n$: the identity matrix of size $n$; $\mathbb{O}_{n \times m}$: the null matrix of size $n \times m$. It can be simplified as 0 when no ambiguity is generated; $A_{m \times n}^{\times (l)}$ denotes the object represented by $A_{m \times n}$ arranged repeatedly for $l$ times.
    \item $A^{\top}$: the transpose of the matrix $A$; $\langle\cdot,\cdot\rangle$: the inner product on a Hilbert space; \(\otimes\): Kronecker product; If $A\in\mathbb{S}_{+}^n$ (resp., $A\in\overline{\mathbb{S}}_{+}^n$), we write $A\succ0$ (resp., $A\succeq0$). For any $A,B\in\mathbb{S}^n$, we use the notation $A\succ B$ (resp., $A\succeq B$) to indicate that $A - B\succ0$ (resp., $A - B\succeq0$).
    \item Let $\mathbb{H}$ be a Euclidean space, and we define the following space \footnote{$\varphi \in \mathbb{F}$ denotes that $\varphi$ is $\mathbb{F}$-progressively measurable.}: 
    $L_{\mathbb{F}}^2(\mathbb{H}) = \{\varphi: [0,\infty) \times \Omega \rightarrow \mathbb{H} \mid \varphi \in \mathbb{F},\mathbb{E} \int_{0}^{\infty} |\varphi(t)|^2 \, dt < \infty \} $; $\mathcal{U}_i = L_{\mathbb{F}}^2(\mathbb{R}^{m_i})(i = 1,2)$, $\mathcal{X}_{\mathrm{loc}}[0,\infty) = \{\varphi: [0,\infty) \times \Omega \rightarrow \mathbb{R}^n \mid \varphi \in \mathbb{F} \,\text{ is continuous},\, \mathbb{E}\left[\sup_{0 \leq t \leq T} |\varphi(t)|^2\right] < \infty, \forall\, T > 0 \}$; $\mathcal{X}[0,\infty) = \{\varphi \in \mathcal{X}_{\mathrm{loc}}[0,\infty) \mid \mathbb{E} \int_{0}^{\infty} |\varphi(t)|^2 \, dt < \infty \}$.
\end{itemize}

\subsection{Zero-Sum Linear-Quadratic Stochastic Differential Games in Infinite Horizons}

Let $(\Omega,\mathcal{F},\mathbb{F},\mathbb{P})$ be a complete filtered probability space on which a r-dimensional standard Brownian motion $W = \{W(t)^{\top}=(w_{1}(t),\dotsb,w_{r}(t));t\geq0\}$ is defined with $\mathbb{F}=\{\mathcal{F}_t\}_{t\geq0}$ being the usual augmentation of the natural filtration generated generated by $W$. We consider the following controlled linear stochastic differential equation (SDE) on $[0,\infty)$:
\begin{equation} 
\label{zslqsdg:sde}
    \begin{cases} 
        &dX(t) = \lbrack AX(t) + B_1 u_1(t) + B_2 u_2(t) \rbrack dt \\
        &\quad\quad\quad+ \sum_{l=1}^{r}\lbrack C_{l}X(t) + D_{l,1} u_1(t) + D_{l,2} u_2(t) \rbrack dw_{l}(t) \\
        &X(0) = x
    \end{cases} ,
\end{equation}
in which $x \in\mathbb{R}^n$, $A,C_{l}\in\mathbb{R}^{n\times n}$ and $B_i,D_{l,i}\in\mathbb{R}^{n\times m_i}$ ($i = 1,2;1 \leq l \leq r$). In the above, the state process $X$ is an $n$-dimensional vector, player 1 $u_1$ and player 2 $u_2$ are an $m_1$-dimensional vector and an $m_2$-dimensional vector, respectively. 

In this paper, Player 1 and Player 2 share the same performance functional:
\begin{equation}
\label{zslqsdg:performance_functional}
    J(x;u_1,u_2)=\mathbb{E}\int_{0}^{\infty}\Bigg[\Bigg\langle\begin{pmatrix}
    Q & S_1^{\top} & S_2^{\top}\\
    S_1 & R_{11} & R_{12}\\
    S_2 & R_{21} & R_{22}
    \end{pmatrix}\begin{pmatrix}
    X(t)\\
    u_1(t)\\
    u_2(t)
    \end{pmatrix},\begin{pmatrix}
    X(t)\\
    u_1(t)\\
    u_2(t)
    \end{pmatrix}\Bigg\rangle
    \Bigg]dt,
\end{equation}
the weighting coefficients in \eqref{zslqsdg:performance_functional} satisfy:
\[
    Q\in\mathbb{S}^n,\quad R_{21}^{\top}=R_{12}\in\mathbb{R}^{m_1\times m_2},\quad S_i\in\mathbb{R}^{m_i\times n}, R_{ii}\in\mathbb{S}^{m_i}(i = 1,2).
\]
We define the set of admissible controls as $\mathcal{U}_{ad}(x)=\{ (u_1,u_2)\in\mathcal{U}_1 \times \mathcal{U}_2\mid X(\cdot;x,u_1,u_2)\in\mathcal{X}[0,\infty)\}$. A pair $(u_1,u_2)\in\mathcal{U}_{ad}(x)$ is called an admissible control pair for the initial state $x$, and the corresponding $X(\cdot;x,u_1,u_2)$ is referred to as the admissible state process. In this case, $J(x, u_1, u_2)$ is clearly well-defined.

In this zero-sum game, Player 1 (\textit{the maximizer}) selects control $u_1$ to maximize \eqref{zslqsdg:performance_functional}, while Player 2 (\textit{the minimizer}) chooses $u_2$ to minimize the same function. The problem is to find an admissible control pair $(u_1^*,u_2^*)$ that both players can accept. We denote the above-mentioned problem as $({SDG})^{\,0}_{\infty}$ for short. For a description of the $({SDG})^{\,0}_{\infty}$ problem, refer to \cite{Sun2020_book} . More detailed information can be found therein. 

Theorem 2.6.7 in \cite{Sun2020_book} directly establishes the relationship between $({SDG})^{\,0}_{\infty}$ and the stochastic GTARE. 
The $({SDG})^{\,0}_{\infty}$ problem corresponds to following stochastic GTARE:
\begin{equation}
\label{zslqsdg:gtare}
    \begin{cases}
        Q(P)-S(P)^{\top}R(P)^{-1}S(P)=0\\
        \mathcal{R}(S(P))\subseteq \mathcal{R}(R(P))\\
        R_{11}(P)\preceq 0,\quad R_{22}(P) \succeq 0 
    \end{cases},    
\end{equation}
where $\mathcal{R}(M)$ denotes the range of a matrix $M$,
\begin{equation}
\label{zslqsdg:gtare_auxiliarymatrix}
\begin{aligned}
    &Q(P)=PA + A^{\top}P+\sum_{l=1}^{r}C_{l}^{\top}PC_{l} + Q,\\
    &S(P)=\begin{bmatrix}
    S_1(P) \\
    S_2(P)
    \end{bmatrix}=\begin{bmatrix}
    B_1^{\top}P+\sum_{l=1}^{r}D_{l,1}^{\top}PC_{l}+S_1 \\
    B_2^{\top}P+\sum_{l=1}^{r}D_{l,2}^{\top}PC_{l}+S_2\\
    \end{bmatrix},\\
    &R(P)=\begin{bmatrix}
    R_{11}(P) & R_{12}(P)\\
    R_{21}(P) & R_{22}(P)\\
    \end{bmatrix},\quad R_{ij}(P)=R_{ij}+\sum_{l=1}^{r}D_{l,i}^{\top}PD_{l,j}\,(i,j = 1,2).\\
\end{aligned}
\end{equation}

\begin{definition}
\label{zslqsde:mean_square}
The system 
\begin{equation*}
    \begin{cases} 
        dX(t) =  A X(t) dt + \sum_{l=1}^{r} C_{l}X(t) dw_{l}(t) \\
        X(0) = x
    \end{cases},
\end{equation*}
donated as $(A,C_{1},\dotsb,C_{r})$ is called \textit{mean-square stable} such that the solution 
satisfies \[\lim_{t\to\infty} \mathbb{E}[X(t)^{\top}X(t)] = 0 \,\,\text{for every initial state}\,\, x \in \mathbb{R}^n.\]
\end{definition}

A concept related to the above admissible control pair is that of \textit{mean-square stabilizers}. We define the set of all such stabilizers associated with the SDE \eqref{zslqsdg:sde} as
\begin{align*}
&\mathcal{K}=\mathcal{K}\left[ A,C_{1},\cdots,C_{r}; \begin{bmatrix} B_1&B_2\end{bmatrix},\begin{bmatrix}D_{1,1}&D_{1,2}\end{bmatrix},\cdots, \begin{bmatrix}D_{r,1}&D_{r,2}\end{bmatrix}\right] \\
&= \left\{
    \begin{aligned}
        &\text{All of} \,\,\,(\varTheta_1, \varTheta_2) \in \mathbb{R}^{m_1 \times n} \times \mathbb{R}^{m_2 \times n} \,\text{such that the system} \,(A + B_1 \varTheta_1 + B_2 \varTheta_2,\\
        &C_1 +D_{1,1} \varTheta_1 + D_{1,2} \varTheta_2, \dotsb, C_r + D_{r,1} \varTheta_1 + D_{r,2} \varTheta_2) \text{ is mean-square stable}.
    \end{aligned}
\right\}.
\end{align*}

\begin{definition}[\cite{Sun2020_book}]
A matrix $P\in\mathbb{S}^n$ is called a stabilizing solution of stochastic GTARE \eqref{zslqsdg:gtare} if $P$ satisfies \eqref{zslqsdg:gtare} and $(K_1(P), K_2(P)) \in \mathcal{K}$, where $\begin{bmatrix} K_1(P) \\ K_2(P) \end{bmatrix}=-R(P)^{-1}\begin{bmatrix}S_1(P) \\S_2(P)\end{bmatrix}$ and $\mathcal{K}$ is the set of all such stabilizers associated with the SDE \eqref{zslqsdg:sde}.
\end{definition}

To facilitate subsequent analysis of the problem, we introduce the following decoupling representation method. Setting 
\[
\begin{bmatrix}  v_1(t) \\ v_2(t) \end{bmatrix}=\begin{bmatrix}  u_1(t) \\ u_2(t) \end{bmatrix}-\begin{bmatrix} K_1(0) \\ K_2(0) \end{bmatrix},\quad 
\begin{bmatrix} K_1(0) \\ K_2(0) \end{bmatrix} = -R(0)^{-1} \begin{bmatrix} S_1(0) \\ S_2(0) \end{bmatrix},
\] and $v_2(t) = Lx(t)$ in $({SDG})^{\,0}_{\infty}$, we obtain
\begin{equation*} 
    \begin{cases} 
        dX(t) = \lbrack A_{L}X(t) + B_1v_1(t) \rbrack dt + \sum_{l=1}^{r}\lbrack C_{lL}X(t) + D_{l,1}v_1(t) \rbrack dw_{l}(t) \\
        X(0) = x
    \end{cases}
\end{equation*}
in which $x \in\mathbb{R}^n$, and 
\begin{equation*}
    J_{L}(x;v_1)=\mathbb{E}\int_{0}^{\infty}\Bigg[\Bigg\langle\begin{pmatrix}
    Q_{L} & S^{\top}_{L} \\
    S_{L} & R_{11}
    \end{pmatrix}\begin{pmatrix}
    X(t)\\
    v_1(t)
    \end{pmatrix},\begin{pmatrix}
    X(t)\\
    v_1(t)
    \end{pmatrix}\Bigg\rangle
    \Bigg]dt,
\end{equation*}
where
\[
\begin{cases}
&A_{L} = A + B_1K_1(0) + B_2K_2(0)+ B_2L\\
&C_{lL} = C_{l} + D_{l,1}K_1(0) + D_{l,2}K_2(0)+ D_{l,2}L,  1 \leq l \leq r \\
&Q_{L} = Q - S^{\top}(0) R(0)^{-1}S(0) + L^{\top}R_{22}L,\quad S_{L}= R_{12}L 
\end{cases}.
\]
The corresponding ARE is
\begin{equation}
\label{gtare_lw}
\begin{aligned}
&PA_{L} + A_{L}^{\top}P+ \sum_{l=1}^r C_{lL}^{\top}PC_{lL}+ Q_{L} - (B_{1}^{\top}P + \sum_{l=1}^r D_{l,1}^{\top}PC_{lL} + S_{L}) ^{\top} \\
& \quad \times(R_{11} + \sum_{l=1}^r D_{l,1}^{\top}PD_{l,1})^{-1} ( B_1^{\top}P + \sum_{l=1}^r D_{l,1}^{\top}PC_{lL} + S_{L} ) = 0.
\end{aligned} 
\end{equation}
Throughout this work, $\mathcal{A}$ stands for the set of $L$, where $L \in \mathbb{R}^{m_2 \times n} $ satisfy: 
\begin{itemize}
    \item The system $(A_{L},C_{1L},\dotsb,C_{rL})$ is \textit{mean-square stable}.
    \item The corresponding ARE \eqref{gtare_lw} has a stabilizing solution $\tilde{P}_{L}$, satisfying the sign conditions
    \[R_{11} + \sum_{l=1}^r D_{l,1}^{\top}\tilde{P}_{L}D_{l,1} \prec 0.\]
\end{itemize}

\begin{definition}[\cite{Dragan2013book}]
Given the following stochastic observation system:
\begin{equation*}
    \begin{cases}
         dX(t) = A X(t)dt + \sum_{l=1}^{r} C_l X(t)dw_l(t) \\
         dY(t) = E_0 X(t)dt + \sum_{l=1}^{r} E_l X(t)dw_l(t)
    \end{cases},
\end{equation*}
where $E_l \in R^{ q\times n}(0 \leq l \leq r)$, we denote this system by $\left[ E_0, E_1, \dotsb, E_r; A, C_1, \dotsb, C_r \right]$. It is said to be \textit{stochastically detectable} if there exists a constant matrix $\varTheta \in \mathbb{R}^{n \times q}$ such that the system
$( A + \varTheta E_0,\ C_1 + \varTheta E_1, \dotsb, C_r + \varTheta E_r )$
is \textit{mean-square stable}.
\end{definition}

\section{Reinforcement Learning}
\label{sec:RL}

\subsection{Nested Iterative}
\label{subsec:AlgorithmDesign_1}

Before introducing the subsequent algorithm, we first define the parameter update schemes required for the Algorithm \ref{alg1:Nested_Iteration}, \ref{alg2:SemiModelBased} and \ref{alg3:SemiModelBased} as follows:
\begin{equation}
\label{iterative:matrices_evolve} 
\begin{aligned}
   &R_{(k)}=\begin{bmatrix}
    R_{11,(k)}& R_{12,(k)}\\
    R_{21,(k)}& R_{22,(k)}\\
    \end{bmatrix}=\begin{bmatrix}
    R_{11}+\sum_{l=1}^{r}D_{l,1}^{\top}P^{(k)}D_{l,1} & R_{12}+\sum_{l=1}^{r}D_{l,1}^{\top}P^{(k)}D_{l,2}\\
    R_{21}+\sum_{l=1}^{r}D_{l,2}^{\top}P^{(k)}D_{l,1} & R_{22}+\sum_{l=1}^{r}D_{l,2}^{\top}P^{(k)}D_{l,2}\\
    \end{bmatrix},\\
    &R^{\sharp}_{22,(k)}=R_{11,(k)}-R_{12,(k)}R_{22,(k)}^{-1}R_{21,(k)},\,S_{(k)} = \begin{bmatrix} S_{1,(k)} \\ S_{2,(k)} \end{bmatrix} =\begin{bmatrix} B_1^{\top} P^{(k)} + \sum_{l=1}^{r}D_{l,1}^{\top} P^{(k)}C_{l}+S_1 \\ B_2^{\top} P^{(k)} + \sum_{l=1}^{r}D_{l,2}^{\top} P^{(k)} C_{l} +S_2 \end{bmatrix},\\
    &\,\text{and}\, \begin{bmatrix} N_{1,(k)} \\ N_{2,(k)} \end{bmatrix}=\begin{bmatrix} B_1^{\top} Z^{(k)} + \sum_{l=1}^{r}D_{l,1}^{\top} Z^{(k)} C_{l,(k)} \\ B_2^{\top} Z^{(k)} + \sum_{l=1}^{r}D_{l,2}^{\top} Z^{(k)} C_{l,(k)}  \end{bmatrix}.\\
\end{aligned}
\end{equation}

To lay the theoretical groundwork for the subsequent on-policy and off-policy reinforcement learning algorithms, we first introduce a baseline nested iteration Algorithm \ref{alg1:Nested_Iteration} under the premise of fully known problem parameters.

\begin{algorithm}[htbp]
\caption{Nested Iteration}
\label{alg1:Nested_Iteration}
    \begin{algorithmic}[1]
        \State \textbf{Input}: Problem parameters.
        \State \textbf{Initialization}: 
        \State \quad Set outer iteration counters $k = 0$; convergence tolerance $\varepsilon > 0$. 
        \State \quad Initialize matrices $L \in \mathcal{A}$, $P^{(0)} = 0$, $R_{22,(0)}=R_{22}$, $M_{(0)}=Q-S^{\top}(0) R^{-1}(0)S(0)$, $\begin{bmatrix} K_{1}^{(0)} \\ K_{2}^{(0)} \end{bmatrix}=-R^{-1}(0)S(0)$ and iterative parameter matrices $A_{(0)}=A+\begin{bmatrix} B_1 & B_2 \end{bmatrix}\begin{bmatrix} K_{1}^{(0)} \\ K_{2}^{(0)} \end{bmatrix}$, $C_{l,(0)}=C_{l}+\begin{bmatrix} D_{l,1} & D_{l,2} \end{bmatrix}\begin{bmatrix} K_{1}^{(0)} \\ K_{2}^{(0)} \end{bmatrix}(l=1,\cdots,r)$.
        \Repeat
            \State \quad Reset inner counter $j = 0$; Initialize gain matrix $K_2^{(0,0)}=L$ or $K_2^{(k,0)}=K_2^{(0)}+K_2^{(0,0)}-K_2^{(k-1)}(k=1,2,\cdots)$.
                \Repeat
                    \State \quad \textbf{Policy Evaluation}: Solve the algebraic Riccati equation for $Z^{(k,j+1)}$:
                    \begin{equation}
                    \label{alg1:PolicyEvaluation}
                    \begin{aligned}
                        &Z^{(k,j+1)}(A_{(k)} + B_2 K_2^{(k,j)}) + (A_{(k)} + B_2 K_2^{(k,j)})^\top Z^{(k,j+1)} \\
                        &+ \sum_{l=1}^r (C_{l,(k)} + D_{l,2} K_2^{(k,j)})^\top Z^{(k,j+1)} (C_{l,(k)} + D_{l,2} K_2^{(k,j)}) \\
                        &+ M_{(k)} + K_2^{(k,j)\top} R_{22,(k)} K_2^{(k,j)} = 0.
                    \end{aligned}
                    \end{equation}
                    \State \quad \textbf{Policy Improvement}: Update the gain matrix $K_2^{(k,j)}$ to $K_2^{(k,j+1)}$:
                    \begin{equation}
                    \label{alg1:PolicyImprovement}
                    \begin{aligned}
                        K_2^{(k,j+1)} = - ( R_{22,(k)} + \sum_{l=1}^r D_{l,2}^\top Z^{(k,j+1)} D_{l,2} )^{-1} (B_2^\top Z^{(k,j+1)}  + \sum_{l=1}^r D_{l,2}^\top Z^{(k,j+1)} C_{l,(k)}).
                    \end{aligned}
                    \end{equation}
                    \State \quad Increment inner counter: $j \gets j + 1$.
                \Until{$\| Z^{(k,j)} - Z^{(k,j-1)} \| < \varepsilon$}
            \State \quad \begin{equation}\label{alg1:outer_iteration}\text{Update} \,\,Z^{(k)} =Z^{(k,j)},\,P^{(k+1)}= P^{(k)} + Z^{(k)}.\end{equation}
            \State \quad Evolve matrices $S_{(k+1)}, N_{1,(k)}, N_{2,(k)}, R_{(k+1)}, R_{12,(k+1)}, R_{22,(k+1)}$, and $R^{\sharp}_{22,(k+1)}$ via \eqref{iterative:matrices_evolve} by using $P^{(k+1)}$ and $Z^{(k)}$. 
            \State \quad Update $\begin{bmatrix} K_{1}^{(k+1)} \\ K_{2}^{(k+1)} \end{bmatrix} =-R_{(k+1)}^{-1}S_{(k+1)} $, $A_{(k+1)} = A +  \begin{bmatrix} B_1 & B_2 \end{bmatrix} \begin{bmatrix} K_{1}^{(k+1)} \\ K_{2}^{(k+1)} \end{bmatrix}, C_{l,(k+1)} = C_{l} +  \begin{bmatrix} D_{l,1} & D_{l,2} \end{bmatrix} \begin{bmatrix} K_{1}^{(k+1)} \\ K_{2}^{(k+1)} \end{bmatrix}(1 \leq l \leq r)$ and $M_{(k+1)} =-\bigl(N_{1,(k)}-R_{12,(k+1)}R_{22,(k+1)}^{-1}N_{2,(k)}\bigr)^{\top} R^{\sharp-1}_{22,(k+1)}\bigl(N_{1,(k)}-R_{12,(k+1)}R_{22,(k+1)}^{-1}N_{2,(k)}\bigr)$.
            \State \quad Increment outer counter: $k \gets k + 1$.
        \Until{$\| P^{(k)} - P^{(k-1)} \| < \varepsilon$}
        \State \Return $P^{(k)}$
    \end{algorithmic}
\end{algorithm}

\begin{remark}
This nested iteration scheme is derived from the unified computational framework for addressing this class of problems, as proposed in \cite{Wang2025}. By establishing theoretical guarantees for the scheme’s core properties, the aforementioned work lays a solid foundation for the development of reinforcement learning approach in the present study.
\end{remark}

To streamline subsequent exposition, we first establish the notational conventions as follows. For a symmetric matrix $P \in \mathbb{S}^n$, define an operator $\text{svec}(P) \in \mathbb{R}^{\frac{1}{2}n(n+1)}$ as:
\[
\text{svec}(P) := \left[ p_{11},\, 2p_{12},\,\dotsb,\, 2p_{1n},\, p_{22},\, 2p_{23},\,\dotsb,\, 2p_{n-1,n},\, p_{nn} \right]^\top
\]
From \cite{Murray2002,Li2022}, there exists a matrix $\mathcal{T} \in \mathbb{R}^{n^2 \times \frac{1}{2}n(n+1)}$ (with rank $\frac{1}{2}n(n+1)$) such that $ \text{vec}(P) = \mathcal{T} \times\text{svec}(P) $, where $\text{vec}(P)$ denotes the standard column-wise vectorization of $P$.  
For a symmetric matrix, only $\frac{1}{2}n(n+1)$ variables are actually required to characterize it. To reduce the degrees of freedom for estimation in subsequent algorithm design while ensuring the symmetry of the matrix, we define the following operator.
For any $W \in \mathbb{R}^{n}$, define $\bar{W} = ( W^\top \otimes W^\top ) \times\mathcal{T} \in \mathbb{R}^{1 \times \frac{1}{2}n(n+1)}$, which satisfies $(W^\top \otimes W^\top) \times \text{vec}(P) = (W^\top \otimes W^\top) \times \mathcal{T} \times \text{svec}(P)= \bar{W} \times \text{svec}(P)$.
Further, we define following sampling matrix for the Algorithm \ref{alg1:Nested_Iteration} and \ref{alg2:SemiModelBased}:
\begin{equation}
\label{alg23:CollectedData}
\begin{aligned}
&\delta_{xx}^{i,(k,j)} = \mathbb{E}^{\mathcal{F}_{t_i}} \left[ \bar{X}^{i,(k,j)\top}(t_{i+1}) - \bar{X}^{i,(k,j)\top}(t_i) \right],\delta_{v_2v_2}^{i,(k,j)} = \mathbb{E}^{\mathcal{F}_{t_i}} \left[ \int_{t_i}^{t_{i+1}} \bar{v}_2^{i,(k,j)}(s)ds \right],\\
&\mathcal{I}_{xx}^{i,(k,j)} = \mathbb{E}^{\mathcal{F}_{t_i}} \left[ \int_{t_i}^{t_{i+1}} X^{i,(k,j)\top}(s) \otimes X^{i,(k,j)\top}(s)ds \right],\mathcal{I}_{xv_2}^{i,(k,j)} = \mathbb{E}^{\mathcal{F}_{t_i}} \left[ \int_{t_i}^{t_{i+1}} X^{i,(k,j)\top}(s) \otimes v_2^{i,(k,j)\top}(s)ds \right],\\
&\mathcal{I}_{v_2v_2}^{i,(k,j)} := \mathbb{E}^{\mathcal{F}_{t_i}} \left[ \int_{t_i}^{t_{i+1}} v_2^{i,(k,j)\top}(s) \otimes v_2^{i,(k,j)\top}(s)ds \right].
\end{aligned}
\end{equation}

\subsection{On-Policy Semi-Model-Based Reinforcement Learning}
\label{subsec:AlgorithmDesign_2}

The convergence analysis of Algorithm \ref{alg1:Nested_Iteration} is deferred to Section \ref{sec:ConvergenceAnalysis}, where a unified convergence analysis for the proposed algorithm is established. For the moment, we focus on the reinforcement learning formulation of the problem and present the following Algorithm \ref{alg2:SemiModelBased}.  

\begin{algorithm}[htbp]
\caption{On-Policy Semi-Model-Based Reinforcement Learning}
\label{alg2:SemiModelBased}
    \begin{algorithmic}[1]
        \State \textbf{Initialization}: 
        \State \quad Set outer iteration counters $k = 0$; convergence tolerance $\varepsilon > 0$. 
        \State \quad Initialize matrices $L \in \mathcal{A}$, $P^{(0)} = 0$, $R_{22,(0)}=R_{22}$, $M_{(0)}=Q-S^{\top}(0)R^{-1}(0)S(0)$, $\begin{bmatrix} K_{1}^{(0)} \\ K_{2}^{(0)} \end{bmatrix}=-R^{-1}(0)S(0)$.
        \Repeat
            \State \quad Reset inner counter $j = 0$; Initialize gain matrix $K_2^{(0,0)}=L+K_{2}^{(0)}$ or $K_2^{(k,0)}=K_2^{(0,0)}-K_2^{(k-1)}(k=1,2,\cdots)$.
            \Repeat
                \State \textbf{Policy Evaluation}:
                \State 1. Obtain $X^{(k,j)}(\cdot)$ by running the SDE \eqref{zslqsdg:sde} with $\begin{bmatrix} u_1^{(k,j)}(\cdot)\\ u_2^{(k,j)}(\cdot) \end{bmatrix}=\begin{bmatrix} K_{1}^{(k)}X(\cdot) \\ K_{2}^{(k)}X(\cdot)+K_{2}^{(k,j)}X(\cdot) \end{bmatrix}$ on $[t, t+\Delta t]$. Compute $\delta_{xx}^{i,(k,j)}$ and $\mathcal{I}_{xx}^{i,(k,j)}(i=1,2,\cdots,N)$.
                \State 2. Solve $Z^{(k,j+1)}$ from the identity \eqref{alg2:PolicyEvaluation+}.
                \State \textbf{Policy Improvement}: Update the gain matrix $K_2^{(k,j)}$ to $K_2^{(k,j+1)}$:
                \begin{equation}
                \label{alg2:PolicyImprovement}
                \begin{aligned}
                        K_2^{(k,j+1)} = -\bigl( R_{22,(k)} + \sum_{l=1}^r D_{l,2}^\top Z^{(k,j+1)} D_{l,2} \bigr)^{-1} \bigl( B_2^\top Z^{(k,j+1)}  + \sum_{l=1}^r D_{l,2}^\top Z^{(k,j+1)} C_{l,(k)}\bigr).
                \end{aligned}
                \end{equation}
                \State \quad Increment inner counter: $j \gets j + 1$.
            \Until{$\| Z^{(k,j)} - Z^{(k,j-1)} \| < \varepsilon$}
            \State \quad Update $Z^{(k)} =Z^{(k,j)},\,P^{(k+1)}= P^{(k)} + Z^{(k)}$. 
            \State \quad Evolve matrices $S_{(k+1)}, N_{1,(k)}, N_{2,(k)}, R_{(k+1)}, R_{12,(k+1)}, R_{22,(k+1)}$, and $R^{\sharp}_{22,(k+1)}$ via \eqref{iterative:matrices_evolve} by using $P^{(k+1)}$ and $Z^{(k)}$. 
            \State \quad Update $\begin{bmatrix} K_{1}^{(k+1)} \\ K_{2}^{(k+1)} \end{bmatrix} =-R_{(k+1)}^{-1} S_{(k+1)} $, $C_{l,(k+1)} = C_{l} +  \begin{bmatrix} D_{l,1} & D_{l,2} \end{bmatrix} \begin{bmatrix} K_{1}^{(k+1)} \\ K_{2}^{(k+1)} \end{bmatrix}(l=1,2,\cdots,r)$, and $M_{(k+1)} =-\bigl(N_{1,(k)}-R_{12,(k+1)}R_{22,(k+1)}^{-1}N_{2,(k)}\bigr)^{\top} R^{\sharp-1}_{22,(k+1)}\bigl(N_{1,(k)}-R_{12,(k+1)}R_{22,(k+1)}^{-1}N_{2,(k)}\bigr)$.
            \State \quad Increment outer counter: $k \gets k + 1$.
        \Until{$\| P^{(k)} - P^{(k-1)} \| < \varepsilon$} 
        \State \Return $P^{(k)}$
    \end{algorithmic}
\end{algorithm}

Next, we examine the relationship between the policy evaluation step in Algorithm \ref{alg2:SemiModelBased} and that in Algorithm \ref{alg1:Nested_Iteration}.

\begin{lemma}
\label{alg2:lemma}
For any $Z^{(k,j+1)}$ and $K_2^{(k,j)}(j=0,1,\cdots)$ generated from \eqref{alg1:PolicyEvaluation} and \eqref{alg1:PolicyImprovement} in Algorithm \ref{alg1:Nested_Iteration} satisfy
\begin{equation}
\label{alg2:PolicyEvaluation}
\delta^{i,(k,j)}_{xx}  \text{svec}(Z^{(k,j+1)})  = -\mathcal{I}^{i,(k,j)}_{xx}\text{vec}(M_{(k)}+ K_2^{(k,j)\top} R_{22,(k)} K_2^{(k,j)})  
\end{equation}
\end{lemma}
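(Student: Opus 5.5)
The plan is to apply It\^o's formula to the quadratic form $X^{(k,j)\top}(s)Z^{(k,j+1)}X^{(k,j)}(s)$ along the closed-loop trajectory generated in Algorithm \ref{alg2:SemiModelBased}. We then take conditional expectations on $[t_i,t_{i+1}]$ and use the Lyapunov identity \eqref{alg1:PolicyEvaluation} to eliminate the unknown drift and diffusion matrices.

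\textbf{Step 1: the closed-loop SDE.} Under the controls $u_1^{(k,j)}=K_1^{(k)}X$ and $u_2^{(k,j)}=(K_2^{(k)}+K_2^{(k,j)})X$, the state equation \eqref{zslqsdg:sde} becomes
\[
dX=(A_{(k)}+B_2K_2^{(k,j)})X\,dt+\sum_{l=1}^r (C_{l,(k)}+D_{l,2}K_2^{(k,j)})X\,dw_l .
\]
This uses $A_{(k)}=A+B_1K_1^{(k)}+B_2K_2^{(k)}$ and $C_{l,(k)}=C_l+D_{l,1}K_1^{(k)}+D_{l,2}K_2^{(k)}$. For brevity write $\hat A=A_{(k)}+B_2K_2^{(k,j)}$ and $\hat C_l=C_{l,(k)}+D_{l,2}K_2^{(k,j)}$.

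\textbf{Step 2: It\^o's formula.} For the symmetric matrix $Z=Z^{(k,j+1)}$, It\^o's formula gives
\[
d(X^\top ZX)=X^\top\Bigl(Z\hat A+\hat A^\top Z+\sum_{l=1}^r\hat C_l^\top Z\hat C_l\Bigr)X\,dt+2\sum_{l=1}^r X^\top Z\hat C_lX\,dw_l .
\]
By \eqref{alg1:PolicyEvaluation}, the bracketed drift matrix equals $-(M_{(k)}+K_2^{(k,j)\top}R_{22,(k)}K_2^{(k,j)})$. Integrate over $[t_i,t_{i+1}]$ and take $\mathbb{E}^{\mathcal{F}_{t_i}}$. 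The stochastic integral has zero conditional expectation, because $X\in\mathcal{X}_{\mathrm{loc}}$ has bounded second moments on finite intervals and the integrand is quadratic in $X$.

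\textbf{Step 3: vectorization.} Rewrite both sides with the identities $X^\top ZX=(X^\top\otimes X^\top)\mathrm{vec}(Z)=\bar X\,\mathrm{svec}(Z)$ and $X^\top MX=(X^\top\otimes X^\top)\mathrm{vec}(M)$. The left side becomes $\delta_{xx}^{i,(k,j)}\mathrm{svec}(Z^{(k,j+1)})$ and the right side becomes $-\mathcal{I}_{xx}^{i,(k,j)}\mathrm{vec}(M_{(k)}+K_2^{(k,j)\top}R_{22,(k)}K_2^{(k,j)})$. This is exactly \eqref{alg2:PolicyEvaluation}.

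\textbf{Main obstacle.} The only delicate point is justifying that the stochastic integral is a true martingale, so that its conditional expectation vanishes. Since the interval is finite, it suffices to show $\mathbb{E}\int_{t_i}^{t_{i+1}}|X|^4\,ds<\infty$. This follows from standard moment estimates for linear SDEs with constant coefficients, because all moments are finite on bounded intervals. If needed, one could instead localize with stopping times and pass to the limit using dominated convergence. A secondary check is that $Z^{(k,j+1)}$ is symmetric, so that the svec representation applies; this holds because \eqref{alg1:PolicyEvaluation} is a Lyapunov equation with symmetric data and has a unique solution under mean-square stability.
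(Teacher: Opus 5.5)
Your proposal is correct and follows essentially the paper's argument: the paper invokes Lemma 3 of \cite{Li2022}, which is exactly the It\^o-formula computation for $X^\top Z^{(k,j+1)}X$ along the closed loop $(A_{(k)}+B_2K_2^{(k,j)},\,C_{l,(k)}+D_{l,2}K_2^{(k,j)})$, using \eqref{alg1:PolicyEvaluation} for the drift, then conditional expectation and vectorization. You also spell out two details the paper leaves to the citation: that the stochastic integral is a true martingale (via finite fourth moments), and that $Z^{(k,j+1)}$ is symmetric, which the $\mathrm{svec}$ representation requires.
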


\begin{proof}
    From Lemma 3 of \cite{Li2022}, we obtain solving the policy evaluation step in Algorithm \ref{alg2:SemiModelBased} is equivalent to solving the identity:
    \begin{align*}
    &\mathbb{E}^{\mathcal{F}_t}\left[ X^{(k,j)\top}(t+\Delta t) Z^{(k,j+1)} X^{(k,j)}(t+\Delta t) \right]-x^\top Z^{(k,j+1)} x= \\
    &-\mathbb{E}^{\mathcal{F}_t} \int_{t}^{t+\Delta t} X^{(k,j) \top}(s) ( M_{(k)} + K_2^{(k,j)\top} R_{22,(k)} K_2^{(k,j)} ) X^{(k,j)}(s) ds.
    \end{align*}
    We vectorize both sides of the equation simultaneously, and thus obtain the \eqref{alg2:PolicyEvaluation}. 
    The proof details is consistent with that in Lemma 3 of \cite{Li2022}. The difference is that since \[M_{(k)} + K^{(k,j)\top} R_{22,(k)} K^{(k,j)} \succeq 0,\] Lyapunov recursion \eqref{alg1:PolicyEvaluation} has a solution $Z^{(k,j+1)} \in \overline{\mathbb{S}}_{+}^n$. This difference constitutes the key problem to be overcome in the subsequent Lemma \ref{alg:lemma}.
\end{proof}

\begin{remark}
\label{alg2:remark}
When there exist $N$ sampling data points such that the sampling matrix $\delta^{(k,j)}_{xx}$ has full column rank, we have
\begin{equation}
\label{alg2:PolicyEvaluation+}
\begin{aligned}   
\text{svec}(Z^{(k,j+1)}) = -(\delta^{(k,j)\top}_{xx}\delta^{(k,j)}_{xx})^{-1}\delta^{(k,j)\top}_{xx}   \mathcal{I}^{(k,j)}_{xx}
\text{vec}(M_{(k)}+ K_2^{(k,j)\top} R_{22,(k)} K_2^{(k,j)}),
\end{aligned}
\end{equation}
where 
\[
\begin{aligned}
\delta^{(k,j)}_{xx} = \left[ \delta_{xx}^{1,(k,j)\top}, \dotsb, \delta_{xx}^{N,(k,j)\,\top} \right]^\top,\mathcal{I}^{(k,j)}_{xx} = \left[ \mathcal{I}_{xx}^{1,(k,j)\,\top}, \dotsb, \mathcal{I}_{xx}^{N,(k,j)\,\top} \right]^\top.
\end{aligned}
\]
This means that solving the policy evaluation step in Algorithm \ref{alg2:SemiModelBased} is equivalent to solving the \eqref{alg1:PolicyEvaluation} in Algorithm \ref{alg1:Nested_Iteration}.
\end{remark}

\subsection{Off-Policy Semi-Model-Based Reinforcement Learning}
\label{subsec:AlgorithmDesign_3}

Next, we introduce the off-policy semi-model-based reinforcement learning approach. To facilitate the description of the algorithm, we first discuss an equivalent computational method corresponding to the policy evaluation step in Algorithm \ref{alg1:Nested_Iteration}.

\begin{lemma}
\label{alg3:lemma}
For any $Z^{(k,j+1)}$ and $K_2^{(k,j)}(j=0,1,\cdots)$ generated from \eqref{alg1:PolicyEvaluation} and \eqref{alg1:PolicyImprovement} in Algorithm \ref{alg1:Nested_Iteration} satisfy
\begin{equation}
\label{alg3:PolicyEvaluation}
\Phi^{(k,j)}_{i} \begin{bmatrix} \text{svec}(Z^{(k,j+1)}) \\ \text{vec}(S_2^{(k,j+1)}) \\ \text{svec}(D_{22}^{(k,j+1)}) \end{bmatrix} = \Theta^{(k,j)}_{i}  
\end{equation}
where 
\begin{equation*}
\begin{aligned}
&\Phi^{(k,j)}_{i} =\begin{bmatrix} \delta^{i,(k,0)}_{xx}& 2 \mathcal{I}^{i,(k,0)}_{xx} (I_{n} \otimes K_2^{(k,j)\top})-2\mathcal{I}^{i,(k,0)}_{xv_2} & \mathcal{I}^{i,(k,0)}_{xx}\bar{K}_2^{(k,j)} - \delta^{i,(k,0)}_{v_2v_2} \end{bmatrix},\\
&\Theta^{(k,j)}_{i} =  -\mathcal{I}^{i,(k,0)}_{xx}\text{vec}(M_{(k)}+ K_2^{(k,j)\top} R_{22,(k)} K_2^{(k,j)}),
\end{aligned}
\end{equation*}
and $Z^{(k,j+1)} \in \mathbb{S}^n$, $S_2^{(k,j+1)} \in \mathbb{R}^{m_2 \times n}$, $D_{22}^{(k,j+1)} \in \mathbb{S}^{m_2 \times m_2}$ denote the matrix variables at iteration $j$; their vectorized forms are $\text{svec}(Z^{(k,j+1)})$, $\text{vec}(S_2^{(k,j+1)})$, and $\text{svec}(D_{22}^{(k,j+1)})$, respectively.
\end{lemma}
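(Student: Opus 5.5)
The identity is an off-policy version of Lemma \ref{alg2:lemma}, so I would again apply It\^o's formula to $X^\top Z^{(k,j+1)}X$, now along a trajectory produced by a behaviour policy. The data in $\Phi^{(k,j)}_i$ carry the superscript $(k,0)$. So I would take the state generated by the SDE \eqref{zslqsdg:sde} under the exploratory input
\[
u_1=K_1^{(k)}X,\qquad u_2=K_2^{(k)}X+v_2 ,
\]
with $v_2$ an arbitrary, possibly noisy, exploration signal collected once for each outer index $k$. Along this trajectory,
\[
dX=\bigl[(A_{(k)}+B_2K_2^{(k,j)})X+B_2(v_2-K_2^{(k,j)}X)\bigr]dt+\sum_{l=1}^r\bigl[(C_{l,(k)}+D_{l,2}K_2^{(k,j)})X+D_{l,2}(v_2-K_2^{(k,j)}X)\bigr]dw_l .
\]
Here $A_{(k)}$ and $C_{l,(k)}$ are the closed-loop matrices from Algorithm \ref{alg1:Nested_Iteration}. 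Rewriting the dynamics around the target policy $K_2^{(k,j)}$ is what makes the data reusable for every inner index $j$.

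Next I would apply It\^o's formula to $X^\top Z^{(k,j+1)}X$ on $[t_i,t_{i+1}]$ and take $\mathbb{E}^{\mathcal{F}_{t_i}}$, so that the martingale part drops out. The drift splits into three groups.
\begin{enumerate}
\item The terms quadratic in $X$ with the closed-loop matrices. By the Lyapunov equation \eqref{alg1:PolicyEvaluation} these equal $-X^\top(M_{(k)}+K_2^{(k,j)\top}R_{22,(k)}K_2^{(k,j)})X$.
\item The cross terms. With $e=v_2-K_2^{(k,j)}X$, they are $2e^\top\bigl(B_2^\top Z+\sum_l D_{l,2}^\top Z(C_{l,(k)}+D_{l,2}K_2^{(k,j)})\bigr)X$.
\item The pure-$e$ term $e^\top\bigl(\sum_l D_{l,2}^\top ZD_{l,2}\bigr)e$.
\end{enumerate}
I would define the unknowns as
\[
S_2^{(k,j+1)}:=B_2^\top Z^{(k,j+1)}+\sum_l D_{l,2}^\top Z^{(k,j+1)}C_{l,(k)},\qquad D_{22}^{(k,j+1)}:=\sum_l D_{l,2}^\top Z^{(k,j+1)}D_{l,2}.
\]
These are precisely the model-dependent quantities that enter the improvement step \eqref{alg1:PolicyImprovement}, which gives $K_2^{(k,j+1)}=-(R_{22,(k)}+D_{22}^{(k,j+1)})^{-1}S_2^{(k,j+1)}$. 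With this notation the cross term becomes $2e^\top(S_2+D_{22}K_2^{(k,j)})X$. Expanding with $e=v_2-K_2^{(k,j)}X$ and collecting the $D_{22}$ contributions gives the combination $v_2^\top D_{22}v_2-X^\top K_2^{(k,j)\top}D_{22}K_2^{(k,j)}X$ plus the $S_2$ terms $2v_2^\top S_2X-2X^\top K_2^{(k,j)\top}S_2X$.

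Finally I would vectorize with $a^\top Mb=(b^\top\otimes a^\top)\mathrm{vec}(M)$, using the operator $\bar W$ and the matrix $\mathcal{T}$ for the symmetric unknowns $Z$ and $D_{22}$. This yields:
\[
X^\top K_2^{(k,j)\top}S_2X=(X^\top\otimes X^\top)(I_n\otimes K_2^{(k,j)\top})\mathrm{vec}(S_2),\qquad v_2^\top S_2X=(X^\top\otimes v_2^\top)\mathrm{vec}(S_2),
\]
\[
X^\top K_2^{(k,j)\top}D_{22}K_2^{(k,j)}X=(X^\top\otimes X^\top)\bar K_2^{(k,j)}\mathrm{svec}(D_{22}),\qquad v_2^\top D_{22}v_2=\bar v_2\,\mathrm{svec}(D_{22}).
\]
Here $\bar K_2^{(k,j)}$ is read as $(K_2^{(k,j)\top}\otimes K_2^{(k,j)\top})\mathcal{T}$. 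Integrating and taking conditional expectations then produces exactly the rows $\delta_{xx}^{i,(k,0)}$, $2\mathcal{I}_{xx}(I_n\otimes K_2^{(k,j)\top})-2\mathcal{I}_{xv_2}$ and $\mathcal{I}_{xx}\bar K_2^{(k,j)}-\delta_{v_2v_2}$. Moving these to the left-hand side gives \eqref{alg3:PolicyEvaluation}, with the cost term forming $\Theta_i^{(k,j)}$.

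The main obstacle is the sign and ordering bookkeeping. One has to check that every $K_2^{(k,j)}$-dependent correction lands in the displayed block structure with the stated signs. One also has to check that the convention for $\mathrm{vec}(S_2)$ (column-wise, $m_2\times n$) matches the Kronecker order in $\mathcal{I}_{xv_2}$, which is written as $X^\top\otimes v_2^\top$. I would verify this by carefully expanding the scalar $2e^\top(S_2+D_{22}K_2^{(k,j)})X+e^\top D_{22}e$. Integrability of the It\^o integrals on the finite interval is routine, since the state is square-integrable on compacts.
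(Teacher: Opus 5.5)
Your proposal is correct. With $e=v_2-K_2^{(k,j)}X$, the drift of $X^\top Z^{(k,j+1)}X$ is $-X^\top(M_{(k)}+K_2^{(k,j)\top}R_{22,(k)}K_2^{(k,j)})X+2e^\top(S_2^{(k,j+1)}+D_{22}^{(k,j+1)}K_2^{(k,j)})X+e^\top D_{22}^{(k,j+1)}e$. The mixed terms $v_2^\top D_{22}^{(k,j+1)}K_2^{(k,j)}X$ cancel. Your four Kronecker identities then reproduce the three blocks of $\Phi_i^{(k,j)}$ and the vector $\Theta_i^{(k,j)}$, with the stated signs.

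The paper does no computation here. It calls the lemma a direct corollary of Lemma 2 in \cite{Zhang2023}. In effect, it freezes player 1 at $K_1^{(k)}X$ and treats what remains as a single-player stochastic LQ problem with coefficients $(A_{(k)},C_{l,(k)};B_2,D_{l,2})$ and weights $(M_{(k)},R_{22,(k)})$. It then imports the off-policy identity from that paper. Your route is the self-contained version of that reduction. It is the same It\^o-plus-vectorization argument the paper writes out for the model-free Lemma \ref{alg4:lemma1}.

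The citation buys brevity. Your derivation buys precision about the data. It shows that the blocks as stated are right only when player 1 plays exactly $K_1^{(k)}X$ and the recorded $v_2$ is the deviation of $u_2$ from $K_2^{(k)}X$. In particular, this is why the block contains $I_n\otimes K_2^{(k,j)\top}$ rather than $I_n\otimes(K_2^{(k)}+K_2^{(k,j)})^\top$ as in Lemma \ref{alg4:lemma1}. The data-collection step of Algorithm \ref{alg3:SemiModelBased} does not state this behaviour policy unambiguously, and your explicit choice resolves it.

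One small technical remark. The integrand of the stochastic integral is quadratic in $(X,v_2)$, so square-integrability alone only gives a local martingale. Localizing, then using $\mathbb{E}\sup_{[0,T]}|X|^2<\infty$ with dominated convergence, closes this, so calling it routine is justified.
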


\begin{proof}
This lemma is a direct corollary of Lemma 2 in \cite{Zhang2023}.    
\end{proof}

To ensure the uniqueness of the resulting matrix $Z^{(k,j+1)}, S_2^{(k,j+1)}, D_{22}^{(k,j+1)}$, we define the following data matrix.
For any positive integer $N$, we define
\begin{align*}
\Phi^{(k,j)} &= \left[\Phi^{(k,j)\top}_{1} , \dotsb, \Phi^{(k,j)\top}_{N} \right]^\top =\begin{bmatrix} \delta^{(k,0)}_{xx}& 2 \mathcal{I}^{(k,0)}_{xx} (I_{n} \otimes K_2^{(k,j)\top})-2\mathcal{I}^{(k,0)}_{xv_2} & \mathcal{I}^{(k,0)}_{xx}\bar{K}_2^{(k,j)} - \delta^{(k,0)}_{v_2v_2} \end{bmatrix},\\
\Theta^{(k,j)} &= \left[ \Theta^{(k,j)\top}_{1}, \dotsb, \Theta^{(k,j)\top}_{N} \right]^\top = -\mathcal{I}^{(k,0)}_{xx}\text{vec}(M_{(k)}+ K_2^{(k,j)\top} R_{22,(k)} K_2^{(k,j)}),
\end{align*}
where
\begin{align*}
\delta^{(k,0)}_{xx} &= \left[ \delta_{xx}^{1,(k,0)\top}, \dotsb, \delta_{xx}^{N,(k,0)\,\top} \right]^\top,\, \delta^{(k,0)}_{v_2v_2} = \left[ \delta_{v_2v_2}^{1,(k,0)\top}, \dotsb, \delta_{v_2v_2}^{N,(k,0)\top} \right]^\top, \\
\mathcal{I}^{(k,0)}_{xx} &= \left[ \mathcal{I}_{xx}^{1,(k,0)\,\top}, \dotsb, \mathcal{I}_{xx}^{N,(k,0)\,\top} \right]^\top,\, \mathcal{I}^{(k)}_{xv_2} = \left[ \mathcal{I}_{xv_2}^{1,(k,0)\,\top}, \dotsb, \mathcal{I}_{xv_2}^{N,(k,0)\,\top} \right]^\top.
\end{align*}

Then, the core iteration equation (for solving $Z^{(k,j+1)}, S_2^{(k,j+1)}, D_{22}^{(k,j+1)}$ ) is:
\[
\Phi^{(k,j)} \begin{bmatrix} \text{svec}(Z^{(k,j+1)}) \\ \text{vec}(S_2^{(k,j+1)}) \\ \text{svec}(D_{22}^{(k,j+1)}) \end{bmatrix} = \Theta^{(k,j)}.
\]
From Lemma 3 in \cite{Zhang2023}, if there is a $h \in \mathbb{Z}^+$, such that
\begin{equation}
\label{alg3:RankCondition}
\mathrm{rank}\bigl([\mathcal{I}^{(k,0)}_{xx},\ \mathcal{I}^{(k,0)}_{xv_2},\ \delta^{(k,0)}_{v_2v_2}]\bigr) = \frac{n(n+1)}{2}+m_2n+\frac{m_2(m_2+1)}{2},
\end{equation}
holds for any $i \geq h$, then $\Phi^{(k,j)}(\forall j \in \mathbb{Z})$ has full column rank. 
Under the full column rank condition of $\Phi^{(k,j)}$, the solution is unique:
\begin{equation}
\label{alg3:PolicyEvaluation+}
\begin{bmatrix} \text{svec}(Z^{(k,j+1)}) \\ \text{vec}(S_2^{(k,j+1)}) \\ \text{svec}(D_{22}^{(k,j+1)}) \end{bmatrix} = (\Phi^{(k,j)\top}\Phi^{(k,j)})^{-1}\Phi^{(k,j)\top}\Theta^{(k,j)}.
\end{equation}
\begin{remark}
For convenience of presentation, we assume throughout that \[K_2^{(k,j)}\in \mathcal{K}[A_{(k)},C_{1,(k)},\cdots,C_{r,(k)}; B_2,D_{1,2},\cdots, D_{r,2}]\,(j=0,1,2,\cdots)\] holds in Algorithm \ref{alg1:Nested_Iteration}. The corresponding guarantee is established in the proof of Lemma \ref{alg:lemma}.
\end{remark}

After elaborating on the symbol definitions and data matrix requirements, we present the following algorithm. 

\begin{algorithm}[htbp]
\caption{Off-Policy Semi-Model-Based Reinforcement Learning}
\label{alg3:SemiModelBased}
    \begin{algorithmic}[1]
        \State \textbf{Initialization}: 
        \State \quad Set iteration counters $k = 0$; convergence tolerance $\varepsilon > 0$. 
        \State \quad Initialize matrices $L \in \mathcal{A}$, $P^{(0)} = 0$, $R_{22,(0)}=R_{22}$, $M_{(0)}=Q-S^{\top}(0)R^{-1}(0)S(0)$, $\begin{bmatrix} K_{1}^{(0)} \\ K_{2}^{(0)} \end{bmatrix}=-R^{-1}(0)S(0)$.
        \Repeat
            \State \quad Reset inner counter $j = 0$; Initialize gain matrix $K_2^{(0,0)}=L+K_{2}^{(0)}$ or $K_2^{(k,0)}=K_2^{(0,0)}-K_2^{(k-1)}(k=1,2,\cdots)$.
            \State \quad Obtain $X^{(k,0)}(\cdot)$ and $v_2^{(k,0)}(\cdot)$ by running the SDE \eqref{zslqsdg:sde} with the
                $\begin{bmatrix} u_1^{(k,0)}(\cdot)\\ u_2^{(k,0)}(\cdot) \end{bmatrix}=\begin{bmatrix} K_{1}^{(0)}X(\cdot)\\ K_{2}^{(0)}X(\cdot)\end{bmatrix}+\begin{bmatrix} v_1^{(k,0)}(\cdot)\\ v_2^{(k,0)}(\cdot) \end{bmatrix}$ and $\begin{bmatrix} v_1^{(k,0)}(\cdot)\\ v_2^{(k,0)}(\cdot) \end{bmatrix}=\begin{bmatrix} K_{1}^{(k)}X(\cdot) \\ K_{2}^{(k)}X(\cdot)+\epsilon\end{bmatrix}$ on $[t, t+\Delta t]$. Compute $\delta_{xx}^{i,(k,0)},\delta_{v_2v_2}^{i,(k,0)},\mathcal{I}_{xx}^{i,(k,0)},\mathcal{I}_{xv_2}^{i,(k,0)}(i=1,2,\cdots,N)$.
                \Repeat
                    \State \textbf{Policy Evaluation}:
                    \State Solve $Z^{(k,j+1)}, S_2^{(k,j+1)}, D_{22}^{(k,j+1)}$ via \eqref{alg3:PolicyEvaluation+} by $\Phi^{(k,j)}$ and $\Theta^{(k,j)}$.
                    \State \textbf{Policy Improvement}: Update the gain matrix $K_2^{(k,j)}$ to $K_2^{(k,j+1)}$:
                    \begin{align*}
                        K_2^{(k,j+1)} = - (R_{22,(k)} + D_{22}^{(k,j+1)})^{-1}S_2^{(k,j+1)}.
                    \end{align*}
                    \State \quad Increment inner counter: $j \gets j + 1$.
                \Until{$\| Z^{(k,j)} - Z^{(k,j-1)} \| < \varepsilon$}
            \State \quad Update $Z^{(k)} =Z^{(k,j)},\,P^{(k+1)}= P^{(k)} + Z^{(k)}$. 
            \State \quad Evolve matrices $S_{(k+1)}, N_{1,(k)}, N_{2,(k)}, R_{(k+1)},R_{12,(k+1)}, R_{22,(k+1)}$, and $R^{\sharp}_{22,(k+1)}$ via \eqref{iterative:matrices_evolve} by using $P^{(k+1)}$ and $Z^{(k)}$. 
            \State \quad Update $\begin{bmatrix} K_{1}^{(k+1)} \\ K_{2}^{(k+1)} \end{bmatrix} =-R_{(k+1)}^{-1}S_{(k+1)} $, $C_{l,(k+1)} = C_{l} +  \begin{bmatrix} D_{l,1} & D_{l,2} \end{bmatrix} \begin{bmatrix} K_{1}^{(k+1)} \\ K_{2}^{(k+1)} \end{bmatrix}(l=1,2,\cdots,r)$, and $M_{(k+1)} =-\bigl(N_{1,(k)}-R_{12,(k+1)}R_{22,(k+1)}^{-1}N_{2,(k)}\bigr)^{\top} R^{\sharp-1}_{22,(k+1)}\bigl(N_{1,(k)}-R_{12,(k+1)}R_{22,(k+1)}^{-1}N_{2,(k)}\bigr)$.
            \State \quad Increment outer counter: $k \gets k + 1$.
        \Until{$\| P^{(k)} - P^{(k-1)} \| < \varepsilon$} 
        \State \Return $P^{(k)}$
    \end{algorithmic}
\end{algorithm}

\begin{remark}
Algorithms \ref{alg2:SemiModelBased} and \ref{alg3:SemiModelBased} require additional information beyond the initial system parameter $A$. The key distinction between the two algorithms resides in the disparate sampling frequencies they demand. Specifically, Algorithm \ref{alg2:SemiModelBased} necessitates a complete round of sampling per iteration, whereas Algorithm \ref{alg3:SemiModelBased} only requires a single sampling operation at the outset of each inner-loop iteration. The implementation mechanisms of these two algorithms are expected to provide valuable insights for the subsequent design of new algorithms addressing this problem.
\end{remark}

\clearpage

\subsection{Model-Free Reinforcement Learning}
\label{subsec:AlgorithmDesign_4}

Next, we present a fully model-free reinforcement learning algorithm. Unlike Algorithm \ref{alg2:SemiModelBased} and Algorithm \ref{alg3:SemiModelBased},  Algorithm \ref{alg4:ModelFree} does not require prior knowledge of system parameters. In this algorithm, we solve the problem offline using a single batch of data collected at the beginning. Different from the sampling matrix \ref{alg23:CollectedData} described earlier, we redefine the sampling matrix as follows:
\begin{equation*}
\label{alg4:CollectedData}
\begin{aligned}
&\delta_{xx}^{i} := \mathbb{E}^{\mathcal{F}_{t_i}} \left[ \bar{X}^{\top}(t_{i+1}) - \bar{X}^{\top}(t_i) \right],\delta_{v_1v_1}^i := \mathbb{E}^{\mathcal{F}_{t_i}} \left[ \int_{t_i}^{t_{i+1}} \bar{v}_1^{\top}(s)ds \right],\delta_{v_2v_2}^i := \mathbb{E}^{\mathcal{F}_{t_i}} \left[ \int_{t_i}^{t_{i+1}} \bar{v}_2^{\top}(s)ds \right],\\
&\mathcal{I}_{xx}^i := \mathbb{E}^{\mathcal{F}_{t_i}} \left[ \int_{t_i}^{t_{i+1}} X^{\top}(s) \otimes X^{\top}(s)ds \right],\mathcal{I}_{v_2v_1}^i := \mathbb{E}^{\mathcal{F}_{t_i}} \left[ \int_{t_i}^{t_{i+1}} v_2^{\top}(s) \otimes v_1^{\top}(s)ds \right],\\
&\mathcal{I}_{xv_1}^i := \mathbb{E}^{\mathcal{F}_{t_i}} \left[ \int_{t_i}^{t_{i+1}} X^{\top}(s) \otimes v_1^{\top}(s)ds \right],\mathcal{I}_{xv_2}^i := \mathbb{E}^{\mathcal{F}_{t_i}} \left[ \int_{t_i}^{t_{i+1}} X^{\top}(s) \otimes v_2^{\top}(s)ds \right],\\
&\mathcal{I}_{v_1v_1}^i := \mathbb{E}^{\mathcal{F}_{t_i}} \left[ \int_{t_i}^{t_{i+1}} v_1^{\top}(s) \otimes v_1^{\top}(s)ds \right],\mathcal{I}_{v_2v_2}^i := \mathbb{E}^{\mathcal{F}_{t_i}} \left[ \int_{t_i}^{t_{i+1}} v_2^{\top}(s) \otimes v_2^{\top}(s)ds \right].
\end{aligned}
\end{equation*}

We first discuss the mechanism for policy evaluation in Algorithm \ref{alg4:ModelFree} via the following lemma.

\begin{lemma}
\label{alg4:lemma1}
For any $Z^{(k,j+1)}$ ($k=0,1,2,\cdots;j=0,1,2,\cdots$) generated from
\begin{equation*}
\label{alg4:lemma_Lyapunov}
\begin{aligned}
&Z^{(k,j+1)}(A_{(k)} + B_2 K^{(k,j)}_2) + (A_{(k)} + B_2 K^{(k,j)}_2)^\top Z^{(k,j+1)} \\
&+ \sum_{l=1}^r (C_{l,(k)} + D_{l,2} K^{(k,j)}_2)^\top Z^{(k,j+1)} (C_{l,(k)} + D_{l,2} K^{(k,j)}_2)  \\
&+ M_{(k)} + K_2^{(k,j)\top} R_{22,(k)} K_2^{(k,j)} = 0
\end{aligned}
\end{equation*}
satisfy
\begin{equation*}
\label{alg4:PolicyEvaluation}
\Phi^{(k,j)}_{i} \begin{bmatrix} \text{svec}(Z^{(k,j+1)}) \\ \text{vec}(S_1^{(k,j+1)}) \\ \text{vec}(S_2^{(k,j+1)}) \\ \text{svec}(D_{11}^{(k,j+1)}) \\\text{svec}(D_{22}^{(k,j+1)})\\\text{vec}(D_{12}^{(k,j+1)})\end{bmatrix} = \Theta^{(k,j)}_{i},  
\end{equation*}
where 
\begin{equation*}
\label{alg4:lemma1_eq1}
\begin{aligned}
&A_{(k)}=A + B_{1} K_1^{(k)} + B_{2} K_2^{(k)},\,\,C_{l,(k)}=C_{l} + D_{l,1} K_1^{(k)} + D_{l,2} K_2^{(k)}(l=1,2,\cdots),\\
&K_2^{(k,j)} = - ( R_{22,(k)} + \sum_{l=1}^r D_{l,2}^\top Z^{(k,j)} D_{l,2} )^{-1} (B_2^\top Z^{(k,j)}  + \sum_{l=1}^r D_{l,2}^\top Z^{(k,j)} C_{l,(k)}),\\
&S_1^{(k,j+1)}=B_1^{\top} Z^{(k,j+1)} + \sum_{l=1}^{r}D_{l,1}^{\top} Z^{(k,j+1)}C_{l,(k)},S_2^{(k,j+1)}=B_2^{\top} Z^{(k,j+1)} + \sum_{l=1}^{r}D_{l,2}^{\top} Z^{(k,j+1)}C_{l,(k)},\\
&D_{11}^{(k,j+1)}=\sum_{l=1}^{r}D_{l,1}^{\top} Z^{(k,j+1)} D_{l,1},\,D_{22}^{(k,j+1)}=\sum_{l=1}^{r}D_{l,2}^{\top} Z^{(k,j+1)} D_{l,2},\,D_{12}^{(k,j+1)}=\sum_{l=1}^{r}D_{l,1}^{\top} Z^{(k,j+1)} D_{l,2}
\end{aligned}
\end{equation*}
\begin{equation*}
\label{alg4:lemma1_eq2}
\begin{aligned}
&\Phi^{(k,j)}_i =\begin{bmatrix} \delta^{i}_{xx}& H^{i,(k,j)}_{1} & H^{i,(k,j)}_{2} & H^{i,(k,j)}_{3} & H^{i,(k,j)}_{4} & H^{i,(k,j)}_{5}\end{bmatrix},\\
&\Theta^{(k,j)}_{i} =  -\mathcal{I}^{i}_{xx}\text{vec}(M_{(k)}+ K_2^{(k,j)\top} R_{22,(k)} K_2^{(k,j)}),
\end{aligned}
\end{equation*}
with 
\begin{equation*}
\label{alg4:lemma1_eq3}
\begin{aligned}
&H^{i,(k,j)}_{1}=2 \mathcal{I}^{i}_{xx} (I_{n} \otimes K_1^{(k)\top})-2\mathcal{I}^{i}_{xv_1},\\
&H^{i,(k,j)}_{2}=2 \mathcal{I}^{i}_{xx} (I_{n} \otimes (K_2^{(k)}+K_2^{(k,j)})^{\top})-2\mathcal{I}^{i}_{xv_2},\\
&H^{i,(k,j)}_{3}=2\mathcal{I}^{i}_{xv_1}(K_1^{(k)\top} \otimes I_{m_1})\mathcal{T}-\delta_{v_1v_1}^i-\mathcal{I}^{i}_{xx}\bar{K}_1^{(k)},\\
&H^{i,(k,j)}_{4}=2\mathcal{I}^{i}_{xv_2}(K_2^{(k)\top} \otimes I_{m_2})\mathcal{T}-\delta_{v_2v_2}^i-\mathcal{I}^{i}_{xx}\bar{K}_2^{(k)}+\mathcal{I}^{i}_{xx}\bar{K}_2^{(k,j)},\\
&H^{i,(k,j)}_{5}=2\mathcal{I}^{i}_{xv_1}(K_2^{(k)\top} \otimes I_{m_1} )+ 2\mathcal{I}^{i}_{xv_2}(K_1^{(k)\top} \otimes I_{m_2})-2\mathcal{I}^{i}_{v_2v_1}-2\mathcal{I}^{i}_{xx}(K_2^{(k)\top}\otimes K_1^{(k)\top}),
\end{aligned}
\end{equation*}
and $Z^{(k,j+1)} \in \mathbb{S}^n$, $S_1^{(k,j+1)}\in \mathbb{R}^{m_1 \times n}$, $S_2^{(k,j+1)}\in \mathbb{R}^{m_2 \times n}$ $D_{11}^{(k,j+1)} \in \mathbb{S}^{m_1 \times m_1}$, $D_{22}^{(k,j+1)} \in \mathbb{S}^{m_2 \times m_2}$ and  $D_{12}^{(k,j+1)} \in \mathbb{R}^{m_1 \times m_2}$.
\end{lemma}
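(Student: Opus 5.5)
We apply Itô's formula to $X^\top(s) Z^{(k,j+1)} X(s)$ along the single trajectory generated in the data-collection phase, and rewrite the drift and diffusion so that the unknown system matrices appear only through $Z^{(k,j+1)}$, $S_1^{(k,j+1)}$, $S_2^{(k,j+1)}$, $D_{11}^{(k,j+1)}$, $D_{22}^{(k,j+1)}$ and $D_{12}^{(k,j+1)}$. This is the same mechanism as in Lemma~\ref{alg3:lemma} (Lemma 2 of \cite{Zhang2023}), now with both players' exploration signals $v_1,v_2$ present.

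First we write the data-generating dynamics in terms of the iteration-$k$ closed loop. With $u_i = K_i^{(0)}X + v_i$, we have $AX+B_1u_1+B_2u_2 = A_{(k)}X + B_1(v_1 - K_1^{(k)}X) + B_2(v_2 - K_2^{(k)}X)$, up to the fixed offset $K^{(0)}$ that is absorbed consistently into the definition of $v_i$. The diffusion coefficients are rewritten in the same way with $C_{l,(k)}$. We then add and subtract $B_2K_2^{(k,j)}X$ and $D_{l,2}K_2^{(k,j)}X$, so that the closed loop $(A_{(k)}+B_2K_2^{(k,j)},\,C_{l,(k)}+D_{l,2}K_2^{(k,j)})$ from the Lyapunov equation appears explicitly. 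The remaining perturbations are $\tilde v_1 = v_1 - K_1^{(k)}X$ and $\tilde v_2 = v_2 - (K_2^{(k)}+K_2^{(k,j)})X$.

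Next we apply Itô's formula on $[t_i,t_{i+1}]$ and take $\mathbb{E}^{\mathcal F_{t_i}}$, which kills the martingale term; square-integrability holds on a finite interval. The $X^\top(\cdot)X$ part of the drift is replaced, via the Lyapunov equation in the statement, by $-X^\top(M_{(k)}+K_2^{(k,j)\top}R_{22,(k)}K_2^{(k,j)})X$. This term produces $\Theta_i^{(k,j)}$ after vectorization with $\mathcal I_{xx}^i$. The cross terms are $2\tilde v_1^\top S_1^{(k,j+1)}X$ and $2\tilde v_2^\top \hat S_2 X$, where $\hat S_2$ equals $S_2^{(k,j+1)}$ plus a $D_{22}^{(k,j+1)}K_2^{(k,j)}$ correction coming from the $D_{l,2}K_2^{(k,j)}$ term in the diffusion. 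The quadratic diffusion terms are $\tilde v_1^\top D_{11}\tilde v_1$, $\tilde v_2^\top D_{22}\tilde v_2$ and $2\tilde v_1^\top D_{12}\tilde v_2$.

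Then we expand every product in $X$, $v_1$, $v_2$ and vectorize using $\text{vec}(ABC)=(C^\top\otimes A)\text{vec}(B)$, the identity $(W^\top\otimes W^\top)\mathcal T\,\text{svec}(P)=\bar W\,\text{svec}(P)$, and the definitions of $\delta$ and $\mathcal I$. Collecting coefficients of each unknown should reproduce $H_1,\dots,H_5$. For example, the $S_1$ coefficient is $2\mathcal I_{xx}^i(I_n\otimes K_1^{(k)\top})-2\mathcal I^i_{xv_1}$, up to the sign convention obtained by moving everything to the left side. The $D_{11}$ coefficient $2\mathcal I_{xv_1}(K_1^{(k)\top}\otimes I)\mathcal T-\delta_{v_1v_1}-\mathcal I_{xx}\bar K_1^{(k)}$ comes from expanding $(v_1-K_1^{(k)}X)^\top D_{11}(v_1-K_1^{(k)}X)$. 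The $+\mathcal I_{xx}\bar K_2^{(k,j)}$ term in $H_4$ is the $D_{22}$ contribution from the closed-loop part $K_2^{(k,j)}$.

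The main obstacle is bookkeeping. We need to check that the $K_2^{(k,j)}$-dependent cross terms between the closed-loop diffusion and $\tilde v_2$ combine exactly into the stated $H_2$ and $H_4$ without leftover terms. We also need to check that the symmetric unknowns ($Z$, $D_{11}$, $D_{22}$) use $\mathcal T$/bar operators consistently while $D_{12}$ and the $S_i$ use plain $\text{vec}$. We would verify this by treating the identity as a polynomial identity in $(X,v_1,v_2)$ and matching the $XX$, $Xv_1$, $Xv_2$, $v_1v_1$, $v_2v_2$ and $v_1v_2$ blocks separately.
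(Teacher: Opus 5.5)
Your overall route is the paper's: rewrite the dynamics around the closed loop $(A_{(k)}+B_2K_2^{(k,j)},\,C_{l,(k)}+D_{l,2}K_2^{(k,j)})$, apply It\^o's formula to $X^\top Z^{(k,j+1)}X$ on $[t_i,t_{i+1}]$, take $\mathbb{E}^{\mathcal{F}_{t_i}}$, replace the $XX$ drift via the Lyapunov equation, and vectorize. However, your explicit list of drift terms is wrong in one place, and if you carry it forward you will not obtain the stated $H_5$.

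The $\tilde v_1$ cross term is not $2\tilde v_1^\top S_1^{(k,j+1)}X$. The diffusion contains $D_{l,2}K_2^{(k,j)}X$, so the coefficient of $2\tilde v_1^\top(\cdot)X$ is
\[
B_1^\top Z^{(k,j+1)}+\sum_{l=1}^r D_{l,1}^\top Z^{(k,j+1)}\bigl(C_{l,(k)}+D_{l,2}K_2^{(k,j)}\bigr)=S_1^{(k,j+1)}+D_{12}^{(k,j+1)}K_2^{(k,j)}.
\]
This is the exact analogue of the correction you did find for $S_2$. Without it, your $D_{12}$ column comes only from $2\tilde v_1^\top D_{12}^{(k,j+1)}\tilde v_2$ with $\tilde v_2=v_2-(K_2^{(k)}+K_2^{(k,j)})X$. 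It therefore contains $K_2^{(k)}+K_2^{(k,j)}$ where $H_5$ has only $K_2^{(k)}$, and the identity you would derive is false.

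With the missing term kept, the corrections collapse:
\[
2\tilde v_1^\top D_{12}^{(k,j+1)}\bigl(\tilde v_2+K_2^{(k,j)}X\bigr)=2\bigl(v_1-K_1^{(k)}X\bigr)^\top D_{12}^{(k,j+1)}\bigl(v_2-K_2^{(k)}X\bigr),
\]
and
\[
\tilde v_2^\top D_{22}^{(k,j+1)}\tilde v_2+2\tilde v_2^\top D_{22}^{(k,j+1)}K_2^{(k,j)}X=\bigl(v_2-K_2^{(k)}X\bigr)^\top D_{22}^{(k,j+1)}\bigl(v_2-K_2^{(k)}X\bigr)-X^\top K_2^{(k,j)\top}D_{22}^{(k,j+1)}K_2^{(k,j)}X .
\]
These are exactly the forms the paper writes directly in its It\^o display. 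The second appears there as the factored product $(v_2-K_2^{(k)}X-K_2^{(k,j)}X)^\top D_{22}^{(k,j+1)}(v_2-K_2^{(k)}X+K_2^{(k,j)}X)$. Meanwhile the $S_1$ and $S_2$ terms keep $\tilde v_1$ and $\tilde v_2$. Vectorizing these gives $H_1$ through $H_4$ and the $D_{12}$ column directly, subject to the caveat below.

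Your block-matching check would also expose a point the paper's proof passes over. Vectorizing gives
\[
-2(K_1^{(k)}X)^\top D_{12}^{(k,j+1)}v_2=-2\bigl(v_2^\top\otimes X^\top K_1^{(k)\top}\bigr)\mathrm{vec}(D_{12}^{(k,j+1)}),
\]
so the corresponding entry of $H_5$ is $2\,\mathbb{E}^{\mathcal{F}_{t_i}}\bigl[\int_{t_i}^{t_{i+1}}v_2^\top(s)\otimes X^\top(s)\,ds\bigr](I_{m_2}\otimes K_1^{(k)\top})$. This equals the stated $2\mathcal{I}^i_{xv_2}(K_1^{(k)\top}\otimes I_{m_2})$ only when $m_1=1$ or $m_2=1$, as in the paper's example. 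Otherwise a commutation matrix is needed there. Your claim that collecting coefficients reproduces $H_1,\dots,H_5$ should be qualified accordingly.
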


\begin{proof}
First, we write the SDE \eqref{zslqsdg:sde} as:
\begin{align*}
dX(s) =&\bigg[ \left( A + B_1 K_1^{(k)} + B_2 K_2^{(k)} + B_2 K_2^{(k,j)}\right) X(s)  + B_1 \left( v_1(s) - K_1^{(k)} X(s) \right)\\
&+ B_2 \left( v_2(s) - K_2^{(k)} X(s) -K_2^{(k,j)}X(s)\right) \bigg] ds\\
+&\sum^{r}_{l=1} \bigg[ \left( C_{l} + D_{l,1} K_1^{(k)} + D_{l,2} K_2^{(k)}+D_{l,2} K_2^{(k,j)} \right) X(s)  + D_{l,1} \left( v_1(s) -  K_1^{(k)} X(s) \right) \\
&+D_{l,2} \left( v_2(s)- K_2^{(k)} X(s) -K_2^{(k,j)}X(s) \right)\bigg] dW(s). 
\end{align*}

Applying Itô's formula to $X^{\top}(s) Z^{(k,j+1)} X(s)$, we obtain:
\begin{align*}
&d\left( X^{\top}(s) Z^{(k,j+1)} X(s) \right) = \bigg[\left( v_1(s) - K_1^{(k)} X(s) \right)^{\top}S_1^{(k,j)}X(s)+X(s)^{\top}S_1^{(k,j)\top}\left( v_1(s) - K_1^{(k)} X(s) \right)+\\
&\left( v_2(s) - K_2^{(k)} X(s) -K_2^{(k,j)}X(s) \right)^{\top}S_2^{(k,j)}X(s)+X(s)^{\top}S_2^{(k,j)\top}\left( v_2(s) - K_2^{(k)} X(s) -K_2^{(k,j)}X(s) \right)+\\
&\left( v_1(s) -  K_1^{(k)} X(s) \right)^{\top}D_{11}^{(k,j+1)}\left( v_1(s) -  K_1^{(k)} X(s) \right)+\\
&\left( v_2(s) -  K_2^{(k)} X(s) -K_2^{(k,j)}X(s)\right)^{\top}D_{22}^{(k,j+1)}\left( v_2(s) -  K_2^{(k)} X(s) +K_2^{(k,j)}X(s)\right)+\\
& \left( v_1(s) -  K_1^{(k)} X(s) \right)^{\top}D_{12}^{(k,j+1)}\left( v_2(s) -  K_2^{(k)} X(s)\right)+\left( v_2(s) -  K_2^{(k)} X(s)\right)^{\top}D_{21}^{(k,j+1)}\left( v_1(s) -  K_1^{(k)} X(s) \right)+\\
&X^{\top}(s) \Big(Z^{(k,j+1)}\big( A + B_1 K_1^{(k)} + B_2 K_2^{(k)} + B_2 K_2^{(k,j)} \big)+\big( A + B_1 K_1^{(k)} + B_2 K_2^{(k)} + B_2 K_2^{(k,j)} \big)^{\top} Z^{(k,j+1)}\Big) X(s)+\\
&X^{\top}(s)\sum^{r}_{l=1}\left( C_{l} + D_{l,1} K_1^{(k)} + D_{l,2} K_2^{(k)}+D_{l,2} K_2^{(k,j)} \right) ^{\top}Z^{(k,j+1)}\left( C_{l} + D_{l,1} K_1^{(k)} + D_{l,2} K_2^{(k)}+D_{l,2} K_2^{(k,j)} \right)X(s) \bigg] dt\\
&+ (\cdots) dW(s).
\end{align*}
Integrating both sides of equation from $t_{i}$ to $t_{i+1}$ and taking the conditional expectation $\mathbb{E}^{\mathcal{F}_{t_i}}$, we can obtain:
\begin{align*}
&\mathbb{E}^{\mathcal{F}_{t_i}} \left[ X^{\top}(t+\Delta t) Z^{(k,j+1)} X(t+\Delta t) - X^{\top}(t) Z^{(k,j+1)} X(t) \right] \\
=&\mathbb{E}^{\mathcal{F}_{t_i}} \left[ \int_{t_i}^{t_{i+1}} \Bigg\{ 2\left( v_1(s) - K_1^{(k)} X(s) \right)^{\top}S_1^{(k,j+1)}X(s)+2\left( v_2(s) - K_2^{(k)} X(s) -K_2^{(k,j)}X(s) \right)^{\top}S_2^{(k,j+1)}X(s)\right. \\
&\left.+\left( v_1(s) -  K_1^{(k)} X(s) \right)^{\top}D_{11}^{(k,j+1)}\left( v_1(s) -  K_1^{(k)} X(s) \right)  \right. \\
&\left.+\left( v_2(s) -  K_2^{(k)} X(s) -K_2^{(k,j)}X(s)\right)^{\top}D_{22}^{(k,j+1)}\left( v_2(s) -  K_2^{(k)} X(s) +K_2^{(k,j)}X(s)\right) \right.\\
& \left. + 2\left( v_1(s) -  K_1^{(k)} X(s) \right)^{\top}D_{12}^{(k,j+1)}\left( v_2(s) -  K_2^{(k)} X(s) \right) - X^{\top}(s) \left( M_{(k)}+ K_2^{(k,j)\top} R_{22,(k)} K_2^{(k,j)}\right) X(s) \Bigg\} ds \right]. 
\end{align*}
Using vectorization technology and the Kronecker product $\otimes$, we write it as:
\begin{equation}
\label{alg4:lemma1_proof1}
\begin{aligned}
&-\mathbb{E}^{\mathcal{F}_{t_i}} \left[ \int_{t_i}^{t_{i+1}} (X^{\top}(s) \otimes X^{\top}(s)) \mathrm{vec}\left( M_{(k)}+ K_2^{(k,j)\top} R_{22,(k)} K_2^{(k,j)}\right) ds \right] \\
=&\mathbb{E}^{\mathcal{F}_{t_i}} \Bigg[ \bar{X}^{\top}(t_{i+1}) \mathrm{svec}\left( Z^{(k,j+1)} \right) - \bar{X}^{\top}(t_i) \mathrm{svec}\left( Z^{(k,j+1)} \right) \\
&- \int_{t_i}^{t_{i+1}} \left( 2X^{\top}(s) \otimes \left( v_1(s) - K_1^{(k)} X(s) \right)^{\top}\right) \mathrm{vec}\left( S_1^{(k,j+1)} \right) ds \\
&- \int_{t_i}^{t_{i+1}} \left( 2 X^{\top}(s) \otimes \left(v_2(s) - K_2^{(k)} X(s) -K_2^{(k,j)}X(s) \right)^{\top} \right) \mathrm{vec}\left( S_2^{(k,j+1)} \right) ds \\
&- \int_{t_i}^{t_{i+1}} \left( \left( v_1(s) -  K_1^{(k)} X(s)\right)^{\top} \otimes \left( v_1(s) -  K_1^{(k)} X(s)\right)^{\top} \mathcal{T}\mathrm{svec}\left( D_{11}^{(k,j+1)} \right)\right) ds\\
&- \int_{t_i}^{t_{i+1}} \left( \left( v_2(s) -  K_2^{(k)} X(s) +K_2^{(k,j)}X(s) \right)^{\top} \otimes \left( v_2(s) -  K_2^{(k)} X(s) -K_2^{(k,j)}X(s) \right)^{\top} \right)\mathcal{T}\mathrm{svec}\left( D_{22}^{(k,j+1)} \right) ds \\
&- \int_{t_i}^{t_{i+1}} \left(2\left( v_2(s) -  K_2^{(k)} X(s) \right)^{\top} \otimes \left( v_1(s) -  K_1^{(k)} X(s)\right)^{\top} \right)\mathrm{vec}\left( D_{12}^{(k,j+1)} \right) ds  \Bigg].
\end{aligned}
\end{equation}
Based on the notations defined earlier, \eqref{alg4:PolicyEvaluation} can be derived from the \eqref{alg4:lemma1_proof1}.
\end{proof}

To ensure the uniqueness of the resulting matrix, we define the following data matrix.
For any positive integer $N$, we define
\begin{equation*}
\begin{aligned}
\Phi^{(k,j)} &= \left[\Phi^{(k,j)\top}_{1} , \dotsb, \Phi^{(k,j)\top}_{N} \right]^\top \\
&\,=\begin{bmatrix} \delta_{xx},\, H^{(k,j)}_{1},\, H^{(k,j)}_{2} ,\,H^{(k,j)}_{3},\,H^{(k,j)}_{4},\, H^{(k,j)}_{5}\end{bmatrix},\\
\Theta^{(k,j)} &= \left[ \Theta^{(k,j)\top}_{1}, \dotsb, \Theta^{(k,j)\top}_{N} \right]^\top= -\mathcal{I}_{xx}\text{vec}(M_{(k)}+ K_2^{(k,j)\top} R_{22,(k)} K_2^{(k,j)}),
\end{aligned}
\end{equation*}
where
\begin{align*}
\delta_{xx} = \left[ \delta_{xx}^{1\top}, \dotsb, \delta_{xx}^{N\,\top} \right]^\top,\mathcal{I}_{xx} = \left[ \mathcal{I}_{xx}^{1\,\top}, \dotsb, \mathcal{I}_{xx}^{N\,\top} \right]^\top, \\
\,\, H^{(k,j)}_{s}= \left[ H^{1,(k,j)^\top}_{s}, \dotsb, H^{N,(k,j)^\top}_{s} \right]^\top(s=1,2,\cdots,5).
\end{align*}

The core iteration equation (for solving $Z^{(k,j+1)}$, $S_1^{(k,j+1)}$, $S_2^{(k,j+1)}$, $D_{11}^{(k,j+1)}$, $D_{22}^{(k,j+1)} $ and $D_{12}^{(k,j+1)}$ ) is:
\[
\Phi^{(k,j)}\begin{bmatrix} \text{svec}(Z^{(k,j+1)}) \\ \text{vec}(S_1^{(k,j+1)}) \\ \text{vec}(S_2^{(k,j+1)}) \\ \text{svec}(D_{11}^{(k,j+1)}) \\\text{svec}(D_{22}^{(k,j+1)})\\\text{vec}(D_{12}^{(k,j+1)})\end{bmatrix} = \Theta^{(k,j)}.
\]
Next, we present a rank condition to ensure that the solution is unique. In this case, 
\begin{equation}
\label{alg4:PolicyEvaluation+}
\begin{bmatrix} \text{svec}(Z^{(k,j+1)}) \\ \text{vec}(S_1^{(k,j+1)}) \\ \text{vec}(S_2^{(k,j+1)}) \\ \text{svec}(D_{11}^{(k,j+1)}) \\\text{svec}(D_{22}^{(k,j+1)})\\\text{vec}(D_{12}^{(k,j+1)})\end{bmatrix} = (\Phi^{(k,j)\top}\Phi^{(k,j)})^{-1}\Phi^{(k,j)\top}\Theta^{(k,j)}.
\end{equation}

\begin{lemma}
\label{alg4:lemma2}
If the feedback $(K_1^{(k)}, K_2^{(k)}+K_2^{(k,j)}) \in \mathcal{K}$ and there exists a $h \in \mathbb{Z^{+}}$, such that, for all $i \geq h$,
\begin{equation}
\label{alg:rank}
rank([\delta_{xx},\,\mathcal{I}_{xv_1},\,\mathcal{I}_{xv_2},\,\delta_{v_1v_1},\,\delta_{v_2v_2},\,\mathcal{I}_{v_2v_1}])=\frac{n(n+1)}{2}+m_1n+m_2n+\frac{m_1(m_1+1)}{2}+\frac{m_2(m_2+1)}{2}+m_1m_2,
\end{equation}
where
\begin{align*}
&\delta_{xx} = \left[ \delta_{xx}^{1\,\top}, \dotsb, \delta_{xx}^{N\,\top} \right]^\top,\,\, \delta_{v_1v_1} = \left[ \delta_{v_1v_1}^{1\,\top}, \dotsb, \delta_{v_1v_1}^{N\,\top} \right]^\top,\,\, \delta_{v_2v_2} = \left[ \delta_{v_2v_2}^{1\,\top}, \dotsb, \delta_{v_2v_2}^{N\,\top} \right]^\top,\\\\
&\mathcal{I}_{xv_1} = \left[ \mathcal{I}_{xv_1}^{1\,\top}, \dotsb, \mathcal{I}_{xv_1}^{N\,\top} \right]^\top,\,\, \mathcal{I}_{xv_2} = \left[ \mathcal{I}_{xv_2}^{1\,\top}, \dotsb, \mathcal{I}_{xv_2}^{N\,\top} \right]^\top,\,\, \mathcal{I}_{v_2v_1} = \left[ \mathcal{I}_{v_2v_1}^{1\,\top}, \dotsb, \mathcal{I}_{v_2v_1}^{N\,\top} \right]^\top,\\
\end{align*}
then, $\Phi^{(k,j)}$ has full column rank.    
\end{lemma}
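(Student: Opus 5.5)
We argue by contradiction, following the standard template of Lemma 3 in \cite{Zhang2023} (already used for the rank condition \eqref{alg3:RankCondition}). Suppose $\Phi^{(k,j)}$ does not have full column rank. Then there is a nonzero vector
\[
\xi=\begin{bmatrix} \text{svec}(Y) \\ \text{vec}(G_1) \\ \text{vec}(G_2) \\ \text{svec}(F_{11}) \\ \text{svec}(F_{22}) \\ \text{vec}(F_{12}) \end{bmatrix}\neq 0,
\qquad Y\in\mathbb{S}^n,\; G_i\in\mathbb{R}^{m_i\times n},\; F_{ii}\in\mathbb{S}^{m_i},\; F_{12}\in\mathbb{R}^{m_1\times m_2},
\]
with $\Phi^{(k,j)}\xi=0$. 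The idea is to regroup $\Phi^{(k,j)}\xi$ as a linear combination of the raw data columns $\delta_{xx},\mathcal{I}_{xx},\mathcal{I}_{xv_1},\mathcal{I}_{xv_2},\delta_{v_1v_1},\delta_{v_2v_2},\mathcal{I}_{v_2v_1}$, whose coefficients are explicit linear images of $(Y,G_1,G_2,F_{11},F_{22},F_{12})$. The rank condition \eqref{alg:rank} then forces every such coefficient to vanish. From this we will derive $Y=0$, and then all the other blocks.

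\textbf{Step 1 (expansion).} Substitute the definitions of $H^{(k,j)}_1,\dots,H^{(k,j)}_5$. Then $\Phi^{(k,j)}\xi$ becomes
\[
\delta_{xx}\text{svec}(Y)+\mathcal{I}_{xx}\text{vec}(\Lambda_0)-2\mathcal{I}_{xv_1}\text{vec}(\Lambda_1)-2\mathcal{I}_{xv_2}\text{vec}(\Lambda_2)-\delta_{v_1v_1}\text{svec}(F_{11})-\delta_{v_2v_2}\text{svec}(F_{22})-2\mathcal{I}_{v_2v_1}\text{vec}(F_{12}).
\]
Here $\Lambda_1=G_1-F_{11}K_1^{(k)}-F_{12}K_2^{(k)}$, and $\Lambda_2$ is defined analogously. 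The matrix $\Lambda_0$ collects all the $\mathcal{I}_{xx}$ terms:
\[
\Lambda_0=K^{\top}G+G^{\top}K-\text{(quadratic terms in the gains weighted by }F).
\]
Its precise form is routine bookkeeping with $(I\otimes K^\top)$, $\bar K$ and $\mathcal{T}$. The same algebra is used in the proof of Lemma \ref{alg4:lemma1}, run backwards.

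\textbf{Step 2 (eliminating the $\mathcal{I}_{xx}$ column).} The rank condition \eqref{alg:rank} does not include $\mathcal{I}_{xx}$, so we must express it through the other columns. Apply Itô's formula to $X^\top YX$ along the sampled trajectory (with the exploratory inputs $v_1,v_2$), integrate over $[t_i,t_{i+1}]$, and take $\mathbb{E}^{\mathcal{F}_{t_i}}$. This gives an identity of the same shape as \eqref{alg4:lemma1_proof1}. It writes $\mathcal{I}_{xx}\text{vec}(\mathcal{L}(Y))$ as a combination of $\delta_{xx},\mathcal{I}_{xv_i},\delta_{v_iv_i},\mathcal{I}_{v_2v_1}$. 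Here $\mathcal{L}(Y)$ denotes the closed-loop Lyapunov operator
\[
\mathcal{L}(Y)=Y\bar A+\bar A^\top Y+\textstyle\sum_l \bar C_l^\top Y\bar C_l
\]
for the feedback $(K_1^{(k)},K_2^{(k)}+K_2^{(k,j)})$.

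Rather than invert $\mathcal{L}$ directly, I would construct the combination in the other order. Take the candidate $\hat Y$ solving $\mathcal{L}(\hat Y)=-\Lambda_0$, which is uniquely solvable because the feedback is in $\mathcal{K}$. Use its Itô identity to rewrite $\mathcal{I}_{xx}\text{vec}(\Lambda_0)$ in terms of $\delta_{xx},\mathcal{I}_{xv_i},\delta_{v_iv_i},\mathcal{I}_{v_2v_1}$, with coefficients $\hat G_i$, $\hat F$ built from $\hat Y$ as in Lemma \ref{alg4:lemma1}. Then $\Phi^{(k,j)}\xi=0$ becomes a vanishing combination of exactly the columns in \eqref{alg:rank}. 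By full column rank, every coefficient is zero:
\[
Y+\hat Y=0,\quad F_{11}=\hat F_{11},\ F_{22}=\hat F_{22},\ F_{12}=\hat F_{12},\quad \Lambda_i=\hat\Lambda_i .
\]

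\textbf{Step 3 (closing the loop).} These identities say that $(Y,G_1,G_2,F)$ is, up to sign, exactly the tuple generated from a single symmetric matrix $Y$ via the formulas of Lemma \ref{alg4:lemma1}:
\[
G_1=B_1^\top Y+\textstyle\sum_l D_{l,1}^\top YC_{l,(k)},\quad\text{and similarly for }G_2,\ F_{11},\ F_{22},\ F_{12},
\]
together with $\mathcal{L}(Y)=0$. Mean-square stability of the closed loop makes $\mathcal{L}$ injective, so $Y=0$. Consequently $G_1=G_2=0$ and $F=0$, giving $\xi=0$, a contradiction.

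\textbf{Main obstacle.} The hardest part is the bookkeeping in Steps 1 and 2. One must check that the $\mathcal{I}_{xx}$-coefficient produced by $H_1,\dots,H_5$ matches exactly the Itô-generated one, so that all consistency relations close up. In particular this requires correctly handling the symmetrization through $\mathcal{T}$ for $F_{11}$ and $F_{22}$, and the cross term $F_{12}$. I would first verify the scalar case $n=m_1=m_2=1$ to confirm the signs before writing the general identity.
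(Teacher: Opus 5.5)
\textbf{The gap is in Step 2.} Solving the closed-loop Lyapunov equation $\mathcal{L}(\hat Y)=-\Lambda_0$ does not remove the $\mathcal{I}_{xx}$ column. Write the dynamics under which the data are collected as $dX=(\tilde AX+B_1v_1+B_2v_2)\,dt+\sum_l(\tilde C_lX+D_{l,1}v_1+D_{l,2}v_2)\,dw_l$, with $v_1,v_2$ the free exploration inputs. Set
\[
\mathcal{L}_0(Y)=\tilde A^\top Y+Y\tilde A+\sum_l\tilde C_l^\top Y\tilde C_l,\qquad \Gamma_i(Y)=B_i^\top Y+\sum_lD_{l,i}^\top Y\tilde C_l,\qquad \Delta_{ij}(Y)=\sum_lD_{l,i}^\top YD_{l,j}.
\]
Let $\Gamma(Y)$ be the stacked $\Gamma_i(Y)$ and $\Delta(Y)=(\Delta_{ij}(Y))_{i,j=1,2}$. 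Expanded in the raw data columns, Itô's formula for $X^\top YX$ reads
\[
\begin{aligned}
\delta_{xx}\text{svec}(Y)={}&\mathcal{I}_{xx}\text{vec}(\mathcal{L}_0(Y))+2\mathcal{I}_{xv_1}\text{vec}(\Gamma_1(Y))+2\mathcal{I}_{xv_2}\text{vec}(\Gamma_2(Y))\\
&+\delta_{v_1v_1}\text{svec}(\Delta_{11}(Y))+\delta_{v_2v_2}\text{svec}(\Delta_{22}(Y))+2\mathcal{I}_{v_2v_1}\text{vec}(\Delta_{12}(Y)).
\end{aligned}
\]
So every Itô identity has $\mathcal{I}_{xx}$-coefficient $\mathcal{L}_0(\hat Y)$. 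The closed-loop identity of Lemma~\ref{alg4:lemma1} puts $\mathcal{L}(Y)$ in front of $\mathcal{I}_{xx}$ only because its blocks built from $X\otimes(v_i-K_iX)$ and $(v_i-K_iX)\otimes(v_j-K_jX)$ still contain $\mathcal{I}_{xx}$. Indeed, with $K$ the stacked feedback $(K_1^{(k)},K_2^{(k)}+K_2^{(k,j)})$,
\[
\mathcal{L}(Y)=\mathcal{L}_0(Y)+K^\top\Gamma(Y)+\Gamma(Y)^\top K+K^\top\Delta(Y)K.
\]
After your substitution, $\Phi^{(k,j)}\xi$ therefore still contains, up to sign, $\mathcal{I}_{xx}\text{vec}\bigl(K^\top\Gamma(\hat Y)+\Gamma(\hat Y)^\top K+K^\top\Delta(\hat Y)K\bigr)$. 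The stated rank condition has no $\mathcal{I}_{xx}$ column, so it cannot be applied. Membership of the feedback in $\mathcal{K}$ makes $\mathcal{L}$ invertible, but it says nothing about $\mathcal{L}_0$, which is the operator you actually need.

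\textbf{How to repair it.} The missing ingredient comes from the rank condition itself. Suppose $\mathcal{L}_0(Y_0)=0$ for some $0\neq Y_0\in\mathbb{S}^n$. Then the identity above gives a null vector of $[\delta_{xx},\mathcal{I}_{xv_1},\mathcal{I}_{xv_2},\delta_{v_1v_1},\delta_{v_2v_2},\mathcal{I}_{v_2v_1}]$ whose first block is $\text{svec}(Y_0)\neq0$. Hence the stated condition forces $\mathcal{L}_0$ to be injective, and therefore bijective, on $\mathbb{S}^n$.

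Now write $\Phi^{(k,j)}\xi$ in its intended form, row by row $\delta^i_{xx}\text{svec}(Y)-\mathbb{E}^{\mathcal{F}_{t_i}}\int_{t_i}^{t_{i+1}}[2(v-KX)^\top GX+(v-KX)^\top F(v-KX)]\,ds$. Here $v=(v_1^\top,v_2^\top)^\top$, $G=(G_1^\top,G_2^\top)^\top$, and $F$ is the symmetric block matrix built from $F_{11},F_{12},F_{22}$. Its $\mathcal{I}_{xx}$-coefficient is $\Lambda_0=K^\top G+G^\top K-K^\top FK$.

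Take $\hat Y$ with $\mathcal{L}_0(\hat Y)=\Lambda_0$ and substitute. Matching coefficients gives $Y=-\hat Y$, $F=\Delta(Y)$ and $G=\Gamma(Y)+\Delta(Y)K$. Then $\mathcal{L}_0(\hat Y)=\Lambda_0$ becomes exactly $\mathcal{L}(Y)=0$. Only at this point is mean-square stability used, giving $Y=0$ and hence $\xi=0$.

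\textbf{Comparison with the paper.} The paper eliminates the other column: it substitutes the Itô identity for the $Y$-block itself, so no auxiliary Lyapunov equation is solved, and then kills all coefficients by a rank argument. Its coefficient $\mathbb{U}$ contains $A^\top U+UA+C^\top UC$, so it necessarily multiplies $\mathcal{I}_{xx}$, not $\delta_{xx}$. That step really uses the rank condition with $\mathcal{I}_{xx}$ in the first slot. Your repaired route is the one that matches the condition as literally stated. That condition is the stronger one: it is equivalent to the $\mathcal{I}_{xx}$-version together with invertibility of $\mathcal{L}_0$.
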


\begin{proof}
To prove that $\Phi^{(k,j)}$ has full column rank, it is equivalent to show that the equation  
\begin{equation}
\label{alg4:lemma2_proof1}    
\Phi^{(k,j)}V = 0,V \in \mathbb{R}^{d},d = \frac{n(n+1)}{2}+m_1n+m_2n+\frac{m_1(m_1+1)}{2}+\frac{m_2(m_2+1)}{2}+m_1m_2
\end{equation}
has the unique solution $V = 0$.
To achieve it, we now prove it by contradiction.

We assume
\[
V=[\text{svec}(U)^{\top}, \text{vec}(W_1)^{\top}, \text{vec}(W_2)^{\top}, \text{svec}(D_{11})^{\top}, \text{svec}(D_{22})^{\top}, \text{vec}(D_{12})^{\top}]^{\top} \in  \mathbb{R}^{d}
\] 
is a nonzero column vector, where 
\[
\text{svec}(U) \in \mathbb{R}^{\frac{n(n+1)}{2}}, \text{vec}(W_1)\in \mathbb{R}^{m_1n}, \text{vec}(W_2)\in \mathbb{R}^{m_2n}, \text{svec}(D_{11})\in \mathbb{R}^\frac{m_1(m_1+1)}{2}, \text{svec}(D_{22})\in  \mathbb{R}^\frac{m_2(m_2+1)}{2}.
\]
Then, by the definitions of $\text{svec}(\cdot)$ and $\text{vec}(\cdot)$, symmetric matrices $U\in\mathbb{S}^{n}$, and matrix $W_1\in\mathbb{R}^{m_1\times n},W_2\in\mathbb{R}^{m_2\times n},D_{11}\in\mathbb{S}^{m_1\times m_1},D_{22}\in\mathbb{S}^{m_2\times m_2},D_{12}\in\mathbb{R}^{m_1\times m_2}$ can be uniquely determined by $\text{svec}(U), \text{vec}(W_1), \text{vec}(W_2),\\ \text{svec}(D_{11}), \text{svec}(D_{22}), \text{vec}(D_{12})$, respectively.

Applying Itô's formula to $X(s)^{\top} U X(s)$, we obtain:
\begin{equation*}
\begin{aligned}
&\mathbb{E}\big[X(t+\Delta t)^{\top}UX(t+\Delta t)-X(t)^{\top}UX(t)\big]\\
=&\mathbb{E}\int_{t}^{t+\Delta t}X(s)^{\top}\big(A^{\top}U+UA+C^{\top}UC\big)X(s)ds\\
&+2\mathbb{E}\int_{t}^{t+\Delta t}v_1(s)^{\top}(B_1^{\top}U+\sum^r_{l=1}D_{l,1}^{\top}UC)X(s)ds+2\mathbb{E}\int_{t}^{t+\Delta t}v_2(s)^{\top}(B_2^{\top}U+\sum^r_{l=1}D_{l,2}^{\top}UC)X(s)ds\\
&+\mathbb{E}\int_{t}^{t+\Delta t}v_1(s)^{\top}\sum^r_{l=1}D_{l,1}^{\top}UD_{l,1}v_1(s)ds+\mathbb{E}\int_{t}^{t+\Delta t}v_2(s)^{\top}\sum^r_{l=1}D_{l,2}^{\top}UD_{l,2}v_2(s)ds\\
&+2\mathbb{E}\int_{t}^{t+\Delta t}v_1(s)^{\top}\sum^r_{l=1}D_{l,1}^{\top}UD_{l,2}v_2(s)ds.
\end{aligned}
\end{equation*}
Using vectorization technology and the Kronecker product $\otimes$, we write it as:
\begin{equation}
\label{alg4:lemma2_proof2}
\begin{aligned}
&\delta_{xx}\text{svec}(U)=\mathcal{I}_{xx}\text{vec}(A^{\top}U+UA+C^{\top}UC)\\
&+2\mathcal{I}_{xv_1}\text{vec}(B_1^{\top}U+\sum^r_{l=1}D_{l,1}^{\top}UC)+2\mathcal{I}_{xv_2}\text{vec}(B_2^{\top}U+\sum^r_{l=1}D_{l,2}^{\top}UC)\\
&+\delta_{v_1v_1}\text{svec}(\sum^r_{l=1}D_{l,1}^{\top}UD_{l,1})+\delta_{v_2v_2}\text{svec}(\sum^r_{l=1}D_{l,2}^{\top}UD_{l,2})+2\mathcal{I}_{v_2v_1}\text{vec}(\sum^r_{l=1}D_{l,1}^{\top}UD_{l,2}).
\end{aligned}
\end{equation}

Using Lemma \ref{alg4:lemma1}, \eqref{alg4:lemma2_proof2} and the definition of $\Phi^{(k,j)}$, we have
\begin{equation*} 
\begin{aligned}
\Phi^{(k,j)}V =& \delta_{xx}\text{svec}(\mathbb{U})+2\mathcal{I}_{xv_1}\text{vec}(\mathbb{W}_1)+2\mathcal{I}_{xv_2}\text{vec}(\mathbb{W}_2)\\
&+\delta_{v_1v_1}\text{svec}(\mathbb{D}_{11})+\delta_{v_2v_2}\text{svec}(\mathbb{D}_{22})+\mathcal{I}_{v_2v_1}\text{vec}(\mathbb{D}_{12})
\end{aligned}
\end{equation*}
where
\begin{equation}
\label{alg4:lemma2_proof3}
\begin{aligned}
&\mathbb{U}=A^{\top}U+UA+C^{\top}UC+K_1^{(k)\top}W_1+W_1^{\top}K_1^{(k)}-K_1^{(k)\top}D_{11}K_1^{(k)}\\
&+(K_2^{(k)}+K_2^{(k,j)})^{\top}W_2+W_2^{\top}(K_2^{(k)}+K_2^{(k,j)})-K_2^{(k)\top}D_{22}K_2^{(k)}+K_2^{(k,j)\top}D_{22}K_2^{(k,j)}\\
&-K_1^{(k)\top}D_{12}K_2^{(k)}-K_2^{(k)\top}D_{21}K_1^{(k)},\\
&\mathbb{W}_1=B_1^{\top}U+\sum^r_{l=1}D_{l,1}^{\top}UC+D_{11}K_1^{(k)}+D_{12}K_2^{(k)}-W_1,\\
&\mathbb{W}_2=B_2^{\top}U+\sum^r_{l=1}D_{l,2}^{\top}UC+D_{22}K_2^{(k)}+D_{12}^{\top}K_1^{(k)}-W_2,\\
&\mathbb{D}_{11}=\sum^r_{l=1}D_{l,1}^{\top}UD_{l,1}-D_{11}, \mathbb{D}_{22}=\sum^r_{l=1}D_{l,2}^{\top}UD_{l,2}-D_{22}, \mathbb{D}_{12}=\sum^r_{l=1}D_{l,1}^{\top}UD_{l,2}-D_{12}.
\end{aligned}
\end{equation}

Then, we have
\begin{equation}
\label{alg4:lemma2_proof4}
[\delta_{xx},\,\mathcal{I}_{xv_1},\,\mathcal{I}_{xv_2},\,\delta_{v_1v_1},\,\delta_{v_2v_2},\,\mathcal{I}_{v_2v_1}] \begin{bmatrix} \text{svec}(\mathbb{U}) \\ \text{vec}(\mathbb{W}_1) \\ \text{vec}(\mathbb{W}_2) \\ \text{svec}(\mathbb{D}_{11}) \\\text{svec}(\mathbb{D}_{22})\\\text{vec}(\mathbb{D}_{12})\end{bmatrix} =  0.
\end{equation}

It is easy to see that $[\delta_{xx},\,\mathcal{I}_{xv_1},\,\mathcal{I}_{xv_2},\,\delta_{v_1v_1},\,\delta_{v_2v_2},\,\mathcal{I}_{v_2v_1}]$ has full column rank under condition. Then, the solution to \eqref{alg4:lemma2_proof4} is 
\[
\text{svec}(\mathbb{U})=0,\text{vec}(\mathbb{W}_1)=0,\text{vec}(\mathbb{W}_2)=0,\text{svec}(\mathbb{D}_{11})=0,\text{svec}(\mathbb{D}_{22})=0,\text{vec}(\mathbb{D}_{12})=0.
\]

From \eqref{alg4:lemma2_proof3}, we have
\begin{equation}
\label{alg4:lemma2_proof5}
\begin{aligned}
0&=A^{\top}U+UA+C^{\top}UC+K_1^{(k)\top}W_1+W_1^{\top}K_1^{(k)}-K_1^{(k)\top}D_{11}K_1^{(k)}\\
&+(K_2^{(k)}+K_2^{(k,j)})^{\top}W_2+W_2^{\top}(K_2^{(k)}+K_2^{(k,j)})-K_2^{(k)\top}D_{22}K_2^{(k)}+K_2^{(k,j)\top}D_{22}K_2^{(k,j)},\\
&-K_1^{(k)\top}D_{12}K_2^{(k)}-K_2^{(k)\top}D_{12}^{\top}K_1^{(k)},
\end{aligned}
\end{equation}
where
\begin{align*}
&D_{11}=\sum^r_{l=1}D_{l,1}^{\top}UD_{l,1},D_{22}=\sum^r_{l=1}D_{l,2}^{\top}UD_{l,2},D_{12}=\sum^r_{l=1}D_{l,1}^{\top}UD_{l,2},\\
&W_1=B_1^{\top}U+\sum^r_{l=1}D_{l,1}^{\top}UC+\sum^r_{l=1}D_{l,1}^{\top}UD_{l,1}K_1^{(k)}+\sum^r_{l=1}D_{l,1}^{\top}UD_{l,2}K_2^{(k)},\\
&W_2=B_2^{\top}U+\sum^r_{l=1}D_{l,2}^{\top}UC+\sum^r_{l=1}D_{l,2}^{\top}UD_{l,2}K_2^{(k)}+\sum^r_{l=1}D_{l,2}^{\top}UD_{l,1}K_1^{(k)}.
\end{align*}

By transforming the \eqref{alg4:lemma2_proof5}, we obtain
\begin{equation*}
\label{alg4:lemma2_proof6}
\begin{aligned}
0&=(A+B_1K_1^{(k)}+B_2K_2^{(k)}+B_2K_2^{(k,j)})^{\top}U+U(A+B_1K_1^{(k)}+B_2K_2^{(k)}+B_2K_2^{(k,j)})\\
&+\sum^r_{l=1}(C+D_{l,1}K_1^{(k)}+D_{l,2}K_2^{(k)}+D_{l,2}K_2^{(k,j)})^{\top}U(C+D_{l,1}K_1^{(k)}+D_{l,2}K_2^{(k)}+D_{l,2}K_2^{(k,j)})
\end{aligned}
\end{equation*}

Since the feedback $(K_1^{(k)}, K_2^{(k)}+K_2^{(k,j)}) \in \mathcal{K}$, we have $U=0$.
Based on this, we obtain $V = 0$, which contradicts the assumption that \eqref{alg4:lemma2_proof1} has a non-trivial solution. Therefore, when the rank condition is satisfied, $\Phi^{(k,j)}$ has full column rank.
\end{proof}

We now present the description of the model-free reinforcement learning algorithm.

\begin{algorithm}[htbp]
\caption{Model-Free Reinforcement Learning}
\label{alg4:ModelFree}
    \begin{algorithmic}[1]
        \State \textbf{Initialization}: 
        \State \quad Set iteration counters $k = 0$; convergence tolerance $\varepsilon > 0$. 
        \State \quad Initialize matrices $L \in \mathcal{A}$, $P^{(0)} = 0$, $R_{22,(0)}=R_{22}$, $\begin{bmatrix} K_{1}^{(0)} \\ K_{2}^{(0)} \end{bmatrix}=-R^{-1}(0)S(0)$, $M_{(0)}=Q-S^{\top}(0) R^{-1}(0)S(0)$.
        \State \quad Set $\begin{bmatrix} u_1(\cdot)\\ u_2(\cdot) \end{bmatrix}=\begin{bmatrix} K_{1}^{(0)}X(\cdot)\\ K_{2}^{(0)}X(\cdot)\end{bmatrix}+\begin{bmatrix} v_1(\cdot)\\ v_2(\cdot) \end{bmatrix}$ and obtain local state trajectory $X(s),v_1(s),v_2(s)$ by running the system on $[t, t+\Delta t]$. Compute $\delta_{xx},\,\mathcal{I}_{xv_1},\,\mathcal{I}_{xv_2},\,\delta_{v_1v_1},\,\delta_{v_2v_2},\,\delta_{v_2v_1}$.
        \Repeat
            \State \quad Reset inner counter $j = 0$; Initialize gain matrix $K_2^{(0,0)}=L$ or $K_2^{(k,0)}=K_2^{(0,0)}-K_2^{(k-1)}(k=1,2,\cdots)$.
                \Repeat
                    \State \textbf{Policy Evaluation}:
                    \State Compute $\Phi^{(k,j)}$ and $\Theta^{(k,j)}$. Solve $Z^{(k,j+1)}$, $S_1^{(k,j+1)}$, $S_2^{(k,j+1)}$, $D_{11}^{(k,j+1)}$, $D_{22}^{(k,j+1)} $, $D_{12}^{(k,j+1)} $ via \eqref{alg4:PolicyEvaluation+}.
                    \State \textbf{Policy Improvement}: Update the gain matrix $K_2^{(k,j)}$ to $K_2^{(k,j+1)}$:
                    \begin{align*}
                        K_2^{(k,j+1)} = - (R_{22,(k)} + D_{22}^{(k,j+1)})^{-1}S_2^{(k,j+1)}.
                    \end{align*}
                    \State \quad Increment inner counter: $j \gets j + 1$.
                \Until{$\| Z^{(k,j)} - Z^{(k,j-1)} \| < \varepsilon$}
            \State \quad Update $Z^{(k)} =Z^{(k,j)},\,P^{(k+1)}= P^{(k)} + Z^{(k)}$.
            \State \quad Update $R_{(k+1)}=\begin{bmatrix} R_{11,(k+1)} &R_{12,(k+1)}\\ R_{21,(k+1)}&R_{22,(k+1)} \end{bmatrix}=R_{(k)}+\begin{bmatrix} D_{11}^{(k+1,j)} &D_{12}^{(k+1,j)}\\ D_{12}^{(k+1,j)\top}&D_{22}^{(k+1,j)} \end{bmatrix}$, $R^{\sharp}_{22,(k+1)}=R_{11,(k)}-R_{12,(k)}R_{22,(k)}^{-1}R_{21,(k)}$, $\begin{bmatrix} K_{1}^{(k+1)} \\ K_{2}^{(k+1)} \end{bmatrix} =\begin{bmatrix} K_{1}^{(k)} \\ K_{2}^{(k)} \end{bmatrix}-R_{(k+1)}^{-1}\begin{bmatrix} S_{1}^{(k+1,j)} \\ S_{2}^{(k+1,j)} \end{bmatrix}$, and $M_{(k+1)} =-\bigl(S_{1}^{(k+1,j)}-R_{12,(k+1)}R_{22,(k+1)}^{-1}S_{2}^{(k+1,j)}\bigr)^{\top} R^{\sharp-1}_{22,(k+1)}\bigl(S_{1}^{(k+1,j)}-R_{12,(k+1)}R_{22,(k+1)}^{-1}S_{2}^{(k+1,j)}\bigr)$.
             \State \quad Increment outer counter: $k \gets k + 1$.
        \Until{$\| P^{(k)} - P^{(k-1)} \| < \varepsilon$}
        \State \Return $P^{(k)}$
    \end{algorithmic}
\end{algorithm}

\section{Convergence Analysis}
\label{sec:ConvergenceAnalysis}

We then proceed to establish the convergence proofs for Algorithms \ref{alg1:Nested_Iteration}, \ref{alg2:SemiModelBased}, \ref{alg3:SemiModelBased} and \ref{alg4:ModelFree} in Theorem \ref{alg:theorem}. To streamline the subsequent presentation, we first prove the convergence of the inner loop of Algorithm \ref{alg1:Nested_Iteration}. Different from the conclusions in \cite{Zhang2023} and \cite{Li2022}, we derive a more generalized result as follows.

\begin{lemma}
\label{alg:lemma}
For any $k =0,1,2,\cdots$, suppose 
\begin{itemize}
    \item $K_2^{(k,0)} \in \mathcal{K}[A_{(k)},C_{1,(k)},\cdots,C_{r,(k)}; B_2,D_{1,2},\cdots, D_{r,2}]$;
    \item There exist matrices $E_{l,(k)}(0 \leq l \leq r)$ such that $\sum_{l=0}^{r} E_{l,(k)}^\top E_{l,(k)} = M_{(k)}$ and the system
    \[
    \left[E_{0,(k)}, E_{1,(k)}, \dotsb, E_{r,(k)}; A_{(k)}, C_{1,(k)}, \dotsb, C_{r,(k)}\right]
    \]
    is \textit{stochastically detectable}.
\end{itemize}
Then, all the policies $\{K_2^{(k,j)}\}_{j=1}^\infty$ updated by \eqref{alg1:PolicyImprovement} are stabilizers of the system $[A_{(k)},C_{1,(k)},\cdots,C_{r,(k)}; B_2,D_{1,2},\\\cdots, D_{r,2}]$. This means $K_2^{(k,j)}\in \mathcal{K}[A_{(k)},C_{1,(k)},\cdots,C_{r,(k)}; B_2,D_{1,2},\cdots, D_{r,2}](j=0,1,2,\cdots)$. Moreover, the solution $Z^{(k,j+1)} \in \overline{\mathbb{S}}_{+}^n$ in Algorithm \ref{alg1:Nested_Iteration} is unique.
\end{lemma}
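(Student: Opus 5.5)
The plan is an induction on $j$ in the style of Kleinman's argument for stochastic Lyapunov recursions, with $k$ fixed throughout. For a gain $K$, write $A_K = A_{(k)} + B_2K$, $C_{l,K} = C_{l,(k)} + D_{l,2}K$, and let $\mathcal{L}_K(Z) = ZA_K + A_K^\top Z + \sum_l C_{l,K}^\top Z C_{l,K}$.

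The standing facts are these. If $(A_K, C_{1,K},\dots,C_{r,K})$ is mean-square stable, then $\mathcal{L}_K$ is invertible and $-\mathcal{L}_K^{-1}$ preserves $\overline{\mathbb{S}}_+^n$. Consequently, for $K = K_2^{(k,j)}$ the equation \eqref{alg1:PolicyEvaluation} has a unique solution
\[
Z^{(k,j+1)} = -\mathcal{L}_K^{-1}\bigl(M_{(k)} + K^\top R_{22,(k)}K\bigr)\succeq 0 .
\]
This uses $M_{(k)} = \sum_l E_{l,(k)}^\top E_{l,(k)} \succeq 0$ and $R_{22,(k)}\succ 0$; the latter is inherited from the outer loop of \cite{Wang2025} via $L\in\mathcal{A}$, and I take it as given. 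The same facts make $R_{22,(k)} + \sum_l D_{l,2}^\top Z^{(k,j+1)}D_{l,2} \succ 0$, so \eqref{alg1:PolicyImprovement} is well defined. The base case $j=0$ is exactly the first hypothesis, and uniqueness plus semidefiniteness of $Z^{(k,1)}$ follow at once.

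For the inductive step, suppose $K := K_2^{(k,j)}$ is stabilizing and set $Z := Z^{(k,j+1)}$, $K' := K_2^{(k,j+1)}$, $\hat R := R_{22,(k)} + \sum_l D_{l,2}^\top Z D_{l,2}$. I complete the square to rewrite \eqref{alg1:PolicyEvaluation} in terms of the new closed loop:
\[
\mathcal{L}_{K'}(Z) + M_{(k)} + K'^\top R_{22,(k)}K' + (K-K')^\top \hat R (K-K') = 0 .
\]
This is the standard identity, obtained by expanding $A_K = A_{K'} + B_2(K-K')$, $C_{l,K} = C_{l,K'} + D_{l,2}(K-K')$ and using $\hat R K' = -(B_2^\top Z + \sum_l D_{l,2}^\top Z C_{l,(k)})$.

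The main obstacle is the next step: concluding that $K'$ is stabilizing when the forcing term is only positive semidefinite. Write
\[
\Pi := M_{(k)} + K'^\top R_{22,(k)}K' + (K-K')^\top\hat R(K-K') \succeq 0 .
\]
The classical argument would need $\Pi\succ 0$; here it must be replaced by a detectability argument, which is the "key problem" announced after Lemma \ref{alg2:lemma}. I would invoke the stochastic Lyapunov/detectability theorem from \cite{Dragan2013book}: if $Z\succeq0$ solves $\mathcal{L}_{K'}(Z) + \tilde E_0^\top\tilde E_0 + \sum_l \tilde E_l^\top \tilde E_l = 0$ and $[\tilde E_0,\dots,\tilde E_r; A_{K'}, C_{1,K'},\dots]$ is stochastically detectable, then the system is mean-square stable.

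So I need output matrices whose Gramian is $\Pi$ and detectability of the resulting closed-loop system. I will take stacked outputs
\[
\tilde E_0 = \begin{bmatrix} E_{0,(k)} \\ R_{22,(k)}^{1/2}K' \\ \hat R^{1/2}(K'-K)\end{bmatrix},\qquad \tilde E_l = \begin{bmatrix} E_{l,(k)}\\ 0\\ 0\end{bmatrix}.
\]
Detectability is then transferred from $[E_{\cdot,(k)}; A_{(k)}, C_{\cdot,(k)}]$, since output injection absorbs the feedback. If $\Theta$ detects the original system, then $\tilde\Theta = [\Theta,\ -B_2R_{22,(k)}^{-1/2}\text{-type block},\ 0]$ chosen so that $A_{K'} + \tilde\Theta\tilde E_0 = A_{(k)} + \Theta E_{0,(k)}$ reduces the closed loop back to the original. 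The diffusion terms need the same cancellation: the $D_{l,2}K'$ term must be absorbed, which requires injecting through $\tilde E_l$ as well. If this is awkward, I would instead put the rows $R_{22,(k)}^{1/2}K'$ into each $\tilde E_l$ with scaling $1/\sqrt{r+1}$, so that the Gramian is still $\Pi$ up to regrouping. Carrying out this cancellation carefully is the technical heart of the proof.

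Once mean-square stability of $K'$ holds, the induction closes: $K_2^{(k,j+1)}\in\mathcal{K}[\cdots]$, and $Z^{(k,j+2)}$ is the unique, positive semidefinite solution by the standing facts. As a by-product, the identity gives $\mathcal{L}_{K'}(Z^{(k,j+1)} - Z^{(k,j+2)}) = -(K-K')^\top\hat R(K-K') \preceq 0$, hence $Z^{(k,j+2)}\preceq Z^{(k,j+1)}$. This monotonicity is not needed for the lemma but will serve the later convergence theorem.
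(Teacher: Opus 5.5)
Your skeleton is the paper's: induction on $j$, the completed-square identity $\mathcal{L}_{K'}(Z)+\Pi=0$ (which you derive correctly; it is the paper's \eqref{alg:lemma_proof1}), a factorization $\Pi=\sum_{l=0}^r\tilde E_l^\top\tilde E_l$, and the detectability form of the stochastic Lyapunov theorem (the paper cites Lemma 3.5 of \cite{Wang2025}).

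The gap is exactly the step you defer, and neither of your concrete constructions closes it. With $\tilde E_l=[E_{l,(k)};0;0]$, the terms $D_{l,2}K'$ in the diffusion are never cancelled, as you note. The fallback, read literally, also fails: it places the same row $(r+1)^{-1/2}R_{22,(k)}^{1/2}K'$ in the same slot of every $\tilde E_l$. One injection gain acting on a common slot adds the same matrix to the drift and to every diffusion coefficient, so it cannot cancel $B_2K'$ and $D_{1,2}K',\dots,D_{r,2}K'$ at once unless these coincide.

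The missing idea is to give each channel $l=0,\dots,r$ its own slot. Concretely:
\begin{itemize}
\item Let the middle part of $\tilde E_l$ consist of $r+1$ blocks of $m_2$ rows, with $(r+1)^{-1/2}R_{22,(k)}^{1/2}K'$ in block $l$ and zeros elsewhere.
\item Keep $E_{l,(k)}$ on top.
\item Put $\hat R^{1/2}(K'-K)$ at the bottom of $\tilde E_0$ only.
\end{itemize}
Then take
\[
\tilde\Theta=\begin{bmatrix}\Theta & -\sqrt{r+1}\,B_2R_{22,(k)}^{-1/2} & -\sqrt{r+1}\,D_{1,2}R_{22,(k)}^{-1/2} & \cdots & -\sqrt{r+1}\,D_{r,2}R_{22,(k)}^{-1/2} & 0\end{bmatrix}.
\]
With this choice $\sum_{l=0}^r\tilde E_l^\top\tilde E_l=\Pi$, $A_{K'}+\tilde\Theta\tilde E_0=A_{(k)}+\Theta E_{0,(k)}$, and $C_{l,K'}+\tilde\Theta\tilde E_l=C_{l,(k)}+\Theta E_{l,(k)}$. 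This system is mean-square stable by the detectability hypothesis, so the Lyapunov--detectability theorem gives stability of $K'$ and the induction closes.

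For comparison, the paper uses this same per-channel layout but injects through the $(K-K')$ rows, with blocks $B_2\sqrt{(r+1)\hat R^{-1}}$ and $D_{l,2}\sqrt{(r+1)\hat R^{-1}}$. Because it also keeps $\Theta$ on the $E$-rows, its injected system is $(A_{(k)}+B_2K+\Theta E_{0,(k)},\,C_{l,(k)}+D_{l,2}K+\Theta E_{l,(k)})$. As written, neither hypothesis makes this stable. Setting those $\Theta$ blocks to zero repairs it: the system becomes $(A_K,C_{l,K})$, which is stable by the induction hypothesis, so on that route detectability of $M_{(k)}$ is not even needed for this lemma. Your reduction to the original detectable pair through the $K'$ rows is the other consistent route, and it is the one that genuinely uses the second hypothesis. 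Its cost is invertibility of $R_{22,(k)}$, which you (and, tacitly, the paper) already assume.
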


\begin{proof}
Since $K_2^{(k,0)} \in \mathcal{K}[A_{(k)},C_{1,(k)},\cdots,C_{r,(k)}; B_2,D_{1,2},\cdots, D_{r,2}]$ and $M_{(k)} + K^{(k,0)\top} R_{22,(k)} K^{(k,0)} \succeq 0$, Lyapunov recursion \eqref{alg1:PolicyEvaluation} has a unique solution $Z^{(k,1)} \in \overline{\mathbb{S}}_{+}^n$ by using Theorem 1 in \cite{Rami2000}. 

Suppose $j \geq 0$, $K_2^{(k,j)}\in \mathcal{K}[A_{(k)},C_{1,(k)},\cdots,C_{r,(k)}; B_2,D_{1,2},\cdots, D_{r,2}]$, and $Z^{(k,j+1)} \in \overline{\mathbb{S}}_{+}^n$ is the unique solution of Lyapunov recursion \eqref{alg1:PolicyEvaluation}. We now show $K_2^{(k,j+1)} \in \mathcal{K}[A_{(k)},C_{1,(k)},\cdots,C_{r,(k)}; B_2,D_{1,2},\cdots, D_{r,2}]$ and $Z^{(k,j+2)} \in \overline{\mathbb{S}}_{+}^n$. 

By using $K_2^{(k,j+1)} = - ( R_{22,(k)} + \sum_{l=1}^r D_{l,2}^\top Z^{(k,j+1)} D_{l,2} )^{-1} (B_2^\top Z^{(k,j+1)}  + \sum_{l=1}^r D_{l,2}^\top Z^{(k,j+1)} C_{l,(k)} )$, we can obtain
\begin{equation}
\label{alg:lemma_proof1}
\begin{aligned}
&Z^{(k,j+1)}(A_{(k)}+ B_2 K_2^{(k,j+1)})+(A_{(k)}+ B_2 K_2^{(k,j+1)})^{\top} Z^{(k,j+1)} \\
&+ \sum_{l=1}^r(C_{l,(k)} + D_{l,2} K_2^{(k,j+1)})^\top Z^{(k,j+1)}(C_{l,(k)} + D_{l,2} K_2^{(k,j+1)})\\
=&Z^{(k,j+1)}(A_{(k)}+ B_2 K_2^{(k,j)})+(A_{(k)}+ B_2 K_2^{(k,j)})^{\top} Z^{(k,j+1)} \\
&+ \sum_{l=1}^r(C_{l,(k)} + D_{l,2} K_2^{(k,j)})^\top Z^{(k,j+1)}(C_{l,(k)} + D_{l,2} K_2^{(k,j)})\\
&+Z^{(k,j+1)} B_2 (K^{(k,j+1)} - K^{(k,j)})+ (K_2^{(k,j+1)} - K_2^{(k,j)})^\top B^\top Z^{(k,j+1)}  \\
&+ \sum_{l=1}^r(C_{l,(k)} + D_{l,2} K_2^{(k,j+1)})^\top Z^{(k,j+1)}(C_{l,(k)} + D_{l,2} K_2^{(k,j+1)}) \\
&- \sum_{l=1}^r(C_{l,(k)} + D_{l,2} K_2^{(k,j)})^\top Z^{(k,j+1)}(C_{l,(k)} + D_{l,2} K_2^{(k,j)})  \\
=& - M_{(k)}-K_2^{(k,j)\top} R_{22,(k)} K_2^{(k,j)} - \sum_{l=1}^r(K_2^{(k,j+1)} - K_2^{(k,j)})^\top D_{l,2}^\top Z^{(k,j+1)} D_{l,2} (K_2^{(k,j+1)} - K_2^{(k,j)})\\
&+\sum_{l=1}^r K_2^{(k,j+1)\top} D_{l,2}^\top Z^{(k,j+1)} D_{l,2}( K_2^{(k,j+1)} -  K_2^{(k,j)})+\sum_{l=1}^r(K_2^{(k,j+1)} - K_2^{(k,j)})^\top D_{l,2}^\top Z^{(k,j+1)} D_{l,2} K_2^{(k,j+1)}\\
&- K_2^{(k,j+1)\top}(R_{22,(k)} + \sum_{l=1}^rD_{l,2}^\top Z^{(k,j+1)}D_{l,2}) ( K_2^{(k,j+1)} -  K_2^{(k,j)})\\
&-( K_2^{(k,j+1)} -  K_2^{(k,j)})^\top (R_{22,(k)} + \sum_{l=1}^rD_{l,2}^\top Z^{(k,j+1)}D_{l,2})K_2^{(k,j+1)}\\
=&- M_{(k)}-K_2^{(k,j)\top} R_{22,(k)} K_2^{(k,j)} - \sum_{l=1}^r(K_2^{(k,j+1)} - K_2^{(k,j)})^\top D_{l,2}^\top Z^{(k,j+1)} D_{l,2} (K_2^{(k,j+1)} - K_2^{(k,j)})\\
&- K_2^{(k,j+1)\top}R_{22,(k)} ( K_2^{(k,j+1)} -  K_2^{(k,j)})-( K_2^{(k,j+1)} -  K_2^{(k,j)})^\top R_{22,(k)}K_2^{(k,j+1)}\\
=&- M_{(k)}-K_2^{(k,j+1)\top} R_{22,(k)} K_2^{(k,j+1)}-( K_2^{(k,j+1)} -  K_2^{(k,j)})^\top (R_{22,(k)} + \sum_{l=1}^rD_{l,2}^\top Z^{(k,j+1)}D_{l,2})( K_2^{(k,j+1)} -  K_2^{(k,j)})
\end{aligned}
\end{equation}

Next, we set
\[
\hat{E}_{l,(k)} = 
\begin{bmatrix}
\mathbb{O}_{q \times n}^{\times (l)} \\
E_{l,(k)} \\
\mathbb{O}_{q \times n}^{\times (r-l)} \\
\mathbb{O}_{m_2 \times n}^{\times (l)} \\
\sqrt{\frac{1}{r+1}R_{22,(k,j+1)}}(K_2^{(k,j)} - K_2^{(k,j+1)})\\
\mathbb{O}_{m_2 \times n}^{\times (r-l)}\\
\sqrt{R_{22,(k)}}K_2^{(k,j+1)} \\
\end{bmatrix} \in \mathbb{R}^{[(q+m_2)(r+1)] \times n},\,0 \leq l \leq r ,
\]
where $R_{22,(k,j+1)}=R_{22,(k)} + \sum_{l=1}^rD_{l,2}^\top Z^{(k,j+1)}D_{l,2}$.

Subsequently, the \eqref{alg:lemma_proof1} can be converted to
\begin{equation*}
\label{alg:lemma_proof2}
\begin{aligned}
&Z^{(k,j+1)}(A_{(k)}+ B_2 K_2^{(k,j+1)}) +(A_{(k)}+ B_2 K_2^{(k,j+1)})^{\top} Z^{(k,j+1)} \\
&+ \sum_{l=1}^r(C_{l,(k)} + D_{l,2} K_2^{(k,j+1)})^\top Z^{(k,j+1)}(C_{l,(k)} + D_{l,2} K_2^{(k,j+1)})+\sum_{l=0}^{r} \hat{E}_{l,(k)}^\top \hat{E}_{l,(k)}=0.
\end{aligned}
\end{equation*}

Since the system $\left[E_{0,(k)}, E_{1,(k)}, \dotsb, E_{r,(k)}; A_{(k)} , C_{1,(k)} , \dotsb, C_{r,(k)} \right]$ is \textit{stochastically detectable}, there exists $\varTheta$ such that  
$
\left(A_{(k)}  + \varTheta E_{r,(k)}, C_{1,(k)}  + \varTheta E_{1,(k)}, \dotsb, C_{r,(k)} + \varTheta E_{r,(k)} \right)
$ 
is \textit{mean-square stable}.

Let \begin{small}$\hat{\varTheta}\in \mathbb{R}^{n \times [(q+m_2)(r+1)]}$\end{small} and $\hat{\varTheta} =$  
\[
\begin{bmatrix}
\varTheta^{\times (r+1)} & B_2\sqrt{(r+1)R_{22,(k,j+1)}^{-1}} & D_{1,2}\sqrt{(r+1)R_{22,(k,j+1)}^{-1}} & \dotsb & D_{r,2}\sqrt{(r+1)R_{22,(k,j+1)}^{-1}} &\mathbb{O}_{n \times m_2}
\end{bmatrix},
\]  
then the system  
\[
\left(A_{(k)}+ B_2 K_2^{(k,j+1)}  + \hat{\varTheta} \hat{E}_{0,(k)}, C_{1,(k)} + D_{1,2} K_2^{(k,j+1)} + \hat{\varTheta} \hat{E}_{1,(k)}, \dotsb, C_{r,(k)} + D_{r,2} K_2^{(k,j+1)} + \hat{\varTheta} \hat{E}_{r,(k)} \right)
\]  
is \textit{mean-square stable}.
By Lemma 3.5 in \cite{Wang2025}, we get the system  
\[
(A_{(k)}+ B_2 K_2^{(k,j+1)},\ C_{1,(k)} + D_{1,2} K_2^{(k,j+1)}, \dotsb, C_{r,(k)} + D_{r,2} K_2^{(k,j+1)})
\]  
is \textit{mean-square stable}.

This means $K_2^{(k,j+1)} \in \mathcal{K}[A_{(k)},C_{1,(k)},\cdots,C_{r,(k)}; B_2,D_{1,2},\cdots, D_{r,2}]$ and we get Lyapunov recursion \eqref{alg1:PolicyEvaluation} has a unique solution $Z^{(k,j+2)} \in \overline{\mathbb{S}}_{+}^n$ by using Theorem 1 in \cite{Rami2000}.
\end{proof}

Now, we prove the convergence of Algorithms \ref{alg1:Nested_Iteration}, \ref{alg2:SemiModelBased}, \ref{alg3:SemiModelBased}, and \ref{alg4:ModelFree}.

\begin{theorem}
\label{alg:theorem}
Assume the following conditions hold:
\begin{itemize}
    \item $R_{22} \succ 0$ and there exists an $L \in \mathcal{A}$.  
    \item There exist matrices $E_{l,(0)}(0 \leq l \leq r)$ such that $\sum_{l=0}^{r} E_{l,(0)}^\top E_{l,(0)} = Q - S^\top(0) R(0)^{-1}S(0)$ and the system
    \[
    \left[E_{0,(0)}, E_{1,(0)}, \dotsb, E_{r,(0)}; A_{(0)}, C_{1,(0)}, \dotsb, C_{r,(0)}\right]
    \]
is \textit{stochastically detectable}.
\end{itemize}
Then, we have:

(\romannumeral1). The sequences $\{P^{(k)}\}_{k\geq1}$ and $\{Z^{(k,j+1)}\}_{j\geq1}$ for $k=0,1,2,\cdots$ are well-defined via equations \eqref{alg1:PolicyEvaluation} and \eqref{alg1:outer_iteration} in Algorithm \ref{alg1:Nested_Iteration}. Moreover, when the corresponding rank conditions are satisfied, the sequences $\{P^{(k)}\}_{k\geq1}$ and $\{Z^{(k,j+1)}\}_{j\geq1}$ ($k=0,1,2,\cdots$) generated by Algorithms \ref{alg2:SemiModelBased}, (or \ref{alg3:SemiModelBased},or \ref{alg4:ModelFree}) are equivalent to their counterparts in Algorithm \ref{alg1:Nested_Iteration}.
And for each $k = 0,1,2,\cdots$ the following items are fulfilled:
\begin{enumerate}
    \item [$a_k$.] $\begin{bmatrix} K_1(P^{(k+1)})-K_1(P^{(k)})\\ L-K_2(P^{(k+1)})+K_2(0) \end{bmatrix} \in \mathcal{K} \left[ A_{(k)},C_{1,(k)},\cdots,C_{r,(k)}; \begin{bmatrix} B_1&B_2\end{bmatrix},\begin{bmatrix}D_{1,1}&D_{1,2}\end{bmatrix},\cdots, \begin{bmatrix}D_{r,1}&D_{r,2}\end{bmatrix}\right]$.
    \item [$b_k$.] $0 \preceq Z^{(k,j+1)} \preceq Z^{(k,j)}$, $\lim_{j\rightarrow\infty}Z^{(k,j)}=Z^{(k,*)} $ and $\lim_{j\rightarrow\infty}K_2^{(k,j)}=K_2^{(k,*)} \in \mathcal{K}[A_{(k)},C_{1,(k)},\cdots,C_{r,(k)}; B_2,\\D_{1,2},\cdots, D_{r,2}]$.
\end{enumerate}

(\romannumeral2). $\lim_{k\rightarrow\infty}P^{(k)}=P^*$, where $P^*$ is the stabilizing solution of the stochastic GTARE \eqref{zslqsdg:gtare}.
\end{theorem}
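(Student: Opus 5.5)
The plan is an induction on the outer index $k$, with the inner loop handled at each stage by Lemma \ref{alg:lemma}. The equivalence with Algorithms \ref{alg2:SemiModelBased}, \ref{alg3:SemiModelBased} and \ref{alg4:ModelFree} comes from the policy-evaluation lemmas: Lemma \ref{alg2:lemma} with Remark \ref{alg2:remark}, Lemma \ref{alg3:lemma} with the rank condition \eqref{alg3:RankCondition}, and Lemmas \ref{alg4:lemma1}--\ref{alg4:lemma2}. Under the rank conditions, each least-squares system \eqref{alg2:PolicyEvaluation+}, \eqref{alg3:PolicyEvaluation+}, \eqref{alg4:PolicyEvaluation+} has full column rank. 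By those lemmas the true Lyapunov solution $Z^{(k,j+1)}$ of \eqref{alg1:PolicyEvaluation}, together with its associated $S_i$ and $D_{ij}$ blocks, satisfies the system, so it is the unique solution. Hence all four algorithms produce identical iterates, provided the closed-loop gains remain stabilizing. That is exactly what Lemma \ref{alg:lemma} and item $a_k$ supply, and the rank lemma (Lemma \ref{alg4:lemma2}) requires this stabilizability hypothesis. It therefore suffices to prove (i) and (ii) for Algorithm \ref{alg1:Nested_Iteration}.

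For the inner loop at a fixed $k$ we need two ingredients. First, $K_2^{(k,0)}$ must be a stabilizer. For $k=0$ this holds because $L\in\mathcal{A}$ makes $(A_L,C_{lL})$ mean-square stable. For $k\ge1$ the choice $K_2^{(k,0)}=K_2^{(0)}+L-K_2^{(k-1)}$ is designed so that $A_{(k)}+B_2K_2^{(k,0)}$ coincides with the closed loop in $a_{k-1}$, up to the $B_1$ part. Second, we need stochastic detectability of $[E_{\cdot,(k)};A_{(k)},C_{\cdot,(k)}]$ with $M_{(k)}=\sum E^\top E$. For $k=0$ this is assumed. For $k\ge1$ we note that $M_{(k)}$ has the form $-N^\top R^{\sharp-1}_{22,(k)} N$, which is $\succeq0$ once $R^{\sharp}_{22,(k)}\prec0$, and we take $E_{0,(k)}=\sqrt{-R^{\sharp-1}_{22,(k)}}\,(N_{1,(k-1)}-R_{12,(k)}R_{22,(k)}^{-1}N_{2,(k-1)})$. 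We then show detectability via the output-injection argument already used in Lemma \ref{alg:lemma} together with Lemma 3.5 of \cite{Wang2025}.

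With these two ingredients, Lemma \ref{alg:lemma} gives stabilizing $K_2^{(k,j)}$ and $Z^{(k,j+1)}\succeq0$. Next we take the identity \eqref{alg:lemma_proof1} and subtract it from the Lyapunov equation for $Z^{(k,j+2)}$. Both equations share the closed loop $(A_{(k)}+B_2K_2^{(k,j+1)},C_{l,(k)}+D_{l,2}K_2^{(k,j+1)})$. The difference $Z^{(k,j+1)}-Z^{(k,j+2)}$ therefore solves a stable Lyapunov equation with forcing term $(K_2^{(k,j+1)}-K_2^{(k,j)})^\top R_{22,(k,j+1)}(\cdots)\succeq0$. Theorem 1 of \cite{Rami2000} then gives monotonicity $Z^{(k,j+2)}\preceq Z^{(k,j+1)}$. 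A monotone sequence bounded below by $0$ converges, which yields $Z^{(k,*)}$. The gains $K_2^{(k,j)}$ converge by continuity of \eqref{alg1:PolicyImprovement}, provided $R_{22,(k)}+\sum D^\top ZD\succ0$, which follows from $R_{22}\succ0$ and the ordering of the $P^{(k)}$. Finally, $Z^{(k,*)}$ solves the Riccati equation \eqref{gtare_lw}-type for player 2, and its limiting feedback is stabilizing because it is the stabilizing solution of a definite-type Riccati equation. This establishes $b_k$.

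The outer loop is the main obstacle. The first task is to establish $a_k$ and $R^{\sharp}_{22,(k+1)}\prec0$ inductively. Here I would follow the framework of \cite{Wang2025}: for $L\in\mathcal{A}$, the stabilizing solution $\tilde P_L$ of \eqref{gtare_lw} supplies an upper or comparison bound. I would show $P^{(k)}\preceq P^{(k+1)}\preceq \tilde P$-type bounds by writing the GTARE residual $Q(P^{(k+1)})-S^\top R^{-1}S$ in terms of $M_{(k+1)}$ via a completion of squares in the Schur complement $R^\sharp_{22}$. This identity is what defines $M_{(k+1)}$. Then $a_k$ follows from a Lyapunov-type inequality on the difference between $P^{(k+1)}$ and the upper bound, again via Lemma 3.5 of \cite{Wang2025}. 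Monotone and bounded $P^{(k)}$ converge to some $P^*$. Passing to the limit in $P^{(k+1)}=P^{(k)}+Z^{(k)}$ forces $Z^{(k)}\to0$, hence $N_{i,(k)}\to0$ and $M_{(k)}\to0$, so $P^*$ satisfies \eqref{zslqsdg:gtare} with the sign conditions. Stabilizability of $(K_1(P^*),K_2(P^*))$ is obtained as the limit of $a_k$ combined with the detectability/Lyapunov argument, which identifies $P^*$ as the stabilizing solution. I expect the sign control $R_{11}(P^{(k)})\preceq0$ and $R^\sharp_{22,(k)}\prec0$ throughout the induction to be the hardest step, and I would first try to bound it by $R_{11}+\sum D_{l,1}^\top\tilde P_L D_{l,1}\prec0$ using $P^{(k)}\preceq\tilde P_L$-type comparisons.
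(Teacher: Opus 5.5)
Your equivalence argument and your inner-loop argument match the paper's: Lemma \ref{alg:lemma}, then subtracting \eqref{alg:lemma_proof1} from \eqref{alg1:PolicyEvaluation} at index $j+1$ to get monotonicity. The gap is the outer loop. The paper does not reprove it. It takes $K_2^{(k+1,0)}\in\mathcal{K}[A_{(k+1)},C_{1,(k+1)},\dots,C_{r,(k+1)};B_2,D_{1,2},\dots,D_{r,2}]$, item $a_k$ (via Proposition 3.1 of \cite{Wang2025}), the detectable factorization of $M_{(k)}$ for $k\ge1$, $K_2^{(k,*)}\in\mathcal{K}$, and $P^{(k)}\to P^*$ directly from Theorem 3.9 of \cite{Wang2025}. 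You replace that citation with a plan (``I would show'', ``I would first try'') that is never carried out. This already breaks $b_k$ for $k\ge1$. Both hypotheses of Lemma \ref{alg:lemma}, and even $M_{(k)}\succeq0$ (which needs $R^{\sharp}_{22,(k)}\prec0$), are outer-loop facts, so as written only $k=0$ is proved.

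The comparison route with $\tilde P_L$ as an upper barrier can be partly made to work, but it has three holes as stated.

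First, the residual identity $Q(P^{(k)})-S(P^{(k)})^{\top}R(P^{(k)})^{-1}S(P^{(k)})=M_{(k)}$ does not by itself give $P^{(k+1)}\preceq\tilde P_L$. You need a Lyapunov inequality for $\tilde P_L-P^{(k+1)}$ along a closed loop already known to be mean-square stable. The one that works has player 1 at $K_1(P^{(k)})$ and player 2 at $K_2(0)+L$. Equivalently, $K_2(0)+L-K_2(P^{(k)})$ must stabilize $(A_{(k)},C_{l,(k)})$ through $(B_2,D_{l,2})$, which is the same fact that starts the inner loop. That stability must come from the bound at step $k$, via Lemma 3.5 of \cite{Wang2025} and $R_{11}(\tilde P_L)\prec0$, with base case $\tilde P_L\succeq0$. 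So the induction has to run as: bound at $k$ $\Rightarrow$ stability $\Rightarrow$ bound at $k+1$. You only state the deduction of $a_k$ from a bound whose source is left open.

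Second, the final step fails as stated. $\mathcal{K}$ is open, so limits of stabilizers need not stabilize. Moreover, $a_k$ keeps player 2 at the fixed gain $K_2(0)+L$, so its limit says nothing about $(K_1(P^*),K_2(P^*))$. Likewise, ``$K_2^{(k,*)}$ is stabilizing because it is the stabilizing solution'' is circular. You need the detectability/Lyapunov argument applied to the limiting Riccati equation, or Theorem 3.9 of \cite{Wang2025} as in the paper.

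Third, with $E_{0,(k)}^{\top}E_{0,(k)}=M_{(k)}$ you are forced to take $E_{l,(k)}=0$ for $l\ge1$, so output injection cannot correct the diffusion terms $C_{l,(k)}$. You need block-structured $E_{l,(k)}$ as in Lemma \ref{alg:lemma}, injecting toward the stable inner-loop limit of step $k-1$. Two identities make this work. Writing $K(P)$ for the stacked gain $[K_1(P)^{\top},K_2(P)^{\top}]^{\top}$ and $N_{(k-1)}$ for $[N_{1,(k-1)}^{\top},N_{2,(k-1)}^{\top}]^{\top}$, they are $K(P^{(k)})-K(P^{(k-1)})=-R_{(k)}^{-1}N_{(k-1)}$ and $N_{2,(k-1)}=-R_{22,(k)}K_2^{(k-1,*)}$.
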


\begin{proof}
When the corresponding rank conditions are satisfied, we prove that the sequences $\{P^{(k)}\}_{k\geq1}$ and $\{Z^{(k,j+1)}\}_{j\geq1}$ ($k=0,1,2,\cdots$) generated by Algorithms \ref{alg2:SemiModelBased}, (or \ref{alg3:SemiModelBased},or \ref{alg4:ModelFree}) are equivalent to their counterparts in Algorithm \ref{alg1:Nested_Iteration}. For convenience of presentation, we assume that 
\[
K_2^{(k,j)}\in \mathcal{K}[A_{(k)},C_{1,(k)},\cdots,C_{r,(k)}; B_2,D_{1,2},\cdots, D_{r,2}]\,(k=0,1,2,\cdots;j=0,1,2,\cdots)
\] 
holds in Algorithm \ref{alg1:Nested_Iteration}. The detailed proof will be given in the latter part of this proof.

For Algorithm \ref{alg2:SemiModelBased}, we have $Z^{(k,j+1)}$ and $K_2^{(k,j)}$ ($k=0,1,2,\cdots;j=0,1,2,\cdots$) generated from \eqref{alg1:PolicyEvaluation} and \eqref{alg1:PolicyImprovement} in Algorithm \ref{alg1:Nested_Iteration} satisfy \eqref{alg2:PolicyEvaluation+} by using Lemma \ref{alg2:lemma}. When the data matrix has full column rank,  we can obtain the \eqref{alg2:PolicyEvaluation+} has a unique solution. This means policy improvement and policy evaluation in Algorithm \ref{alg2:SemiModelBased} is equivalent to solving its counterpart in Algorithm \ref{alg1:Nested_Iteration}.
For Algorithm \ref{alg3:SemiModelBased}, it can be seen from Lemma \ref{alg3:lemma} that $Z^{(k,j+1)}$ and $K_2^{(k,j)}$ ($k=0,1,2,\cdots;j=0,1,2,\cdots$) generated from \eqref{alg1:PolicyEvaluation} and \eqref{alg1:PolicyImprovement} in Algorithm \ref{alg1:Nested_Iteration} satisfy \eqref{alg3:PolicyEvaluation+}. Through Lemma 3 in \cite{Zhang2023}, we can conclude that when the rank condition of the Algorithm \ref{alg3:SemiModelBased} is guaranteed, \eqref{alg3:PolicyEvaluation+} has a unique solution. Naturally, policy improvement and policy evaluation in Algorithm \ref{alg3:SemiModelBased} is equivalent to solving its counterpart in Algorithm \ref{alg1:Nested_Iteration}.

For Algorithm \ref{alg4:ModelFree}, we have $Z^{(k,j+1)}$ and $K_2^{(k,j)}$ ($k=0,1,2,\cdots;j=0,1,2,\cdots$) generated from \eqref{alg1:PolicyEvaluation} and \eqref{alg1:PolicyImprovement} in Algorithm \ref{alg1:Nested_Iteration} satisfy \eqref{alg4:PolicyEvaluation+} by using Lemma \ref{alg4:lemma1}.  When the rank condition
\[
rank([\delta_{xx},\,\mathcal{I}_{xv_1},\,\mathcal{I}_{xv_2},\,\delta_{v_1v_1},\,\delta_{v_2v_2},\,\mathcal{I}_{v_2v_1}])=\frac{n(n+1)}{2}+m_1n+m_2n+\frac{m_1(m_1+1)}{2}+\frac{m_2(m_2+1)}{2}+m_1m_2,
\]
is satisfied,
we can obtain the \eqref{alg4:PolicyEvaluation+} has a unique solution. This means policy improvement and policy evaluation in Algorithm \ref{alg4:ModelFree} is equivalent to solving its counterpart in Algorithm \ref{alg1:Nested_Iteration}.

Then we prove the sequence $\{Z^{(k,j+1)}\}_{j\geq1}$ and $\{K_2^{(k,j)}\}_{j\geq1}$ in Algorithm \ref{alg1:Nested_Iteration} are well-defined and convergence for each $k = 0,1,2,\cdots$.

Since $L\in \mathcal{A}$, we have
\[K_2^{(0,0)}=L\in \mathcal{K}[A_{(0)},C_{1,(0)},\cdots,C_{r,(0)};B_2,D_{1,2},\cdots, D_{r,2}].\]
From Theorem 3.9 in \cite{Wang2025}, we can obtain
\[K_2^{(k+1,0)}=L-K_2(P^{(k+1)})+K_2(0) \in \mathcal{K}[A_{(k+1)},C_{1,(k+1)},\cdots,C_{r,(k+1)};B_2,D_{1,2},\cdots, D_{r,2}].\]
This also means 
\[\begin{bmatrix} K_1(P^{(k+1)})-K_1(P^{(k)})\\ L-K_2(P^{(k+1)})+K_2(0) \end{bmatrix} \in \mathcal{K} \left[ A_{(k)},C_{1,(k)},\cdots,C_{r,(k)}; \begin{bmatrix} B_1&B_2\end{bmatrix},\begin{bmatrix}D_{1,1}&D_{1,2}\end{bmatrix},\cdots, \begin{bmatrix}D_{r,1}&D_{r,2}\end{bmatrix}\right]\] by using proposition 3.1 in \cite{Wang2025}.

There exist matrices $E_{l,(k)}(0 \leq l \leq r)$ such that $\sum_{l=0}^{r} E_{l,(k)}^\top E_{l,(k)} = M_{(k)}$ and the system
    \[
    \left[E_{0,(k)}, E_{1,(k)}, \dotsb, E_{r,(k)}; A_{(k)}, C_{1,(k)}, \dotsb, C_{r,(k)}\right]
    \]
is \textit{stochastically detectable} from Theorem 3.9 in \cite{Wang2025}.
By Lemma \ref{alg:lemma}, we obtain all the policies $\{K_2^{(k,j)}\}_{j=1}^\infty$ updated by \eqref{alg1:PolicyImprovement} are stabilizers of the system $[A_{(k)},C_{1,(k)},\cdots,C_{r,(k)}; B_2,D_{1,2},\cdots, D_{r,2}]$. Moreover, the solution $Z^{(k,j+1)} \in \overline{\mathbb{S}}_{+}^n(j=0,1,\cdots)$ in Algorithm \ref{alg1:Nested_Iteration} is unique.

We combine \eqref{alg1:PolicyEvaluation} in Algorithm \ref{alg1:Nested_Iteration} and \eqref{alg:lemma_proof2} in Lemma \ref{alg:lemma} to obtain
\begin{align*}
& \Delta(A_{(k)}+ B_2 K_2^{(k,j+1)})+(A_{(k)}+ B_2 K_2^{(k,j+1)})^{\top} \Delta \\
&+ \sum_{l=1}^r(C_{l,(k)} + D_{l,2} K_2^{(k,j+1)})^\top \Delta(C_{l,(k)} + D_{l,2} K_2^{(k,j+1)})\\
&-(K_2^{(k,j)} - K^{(k,j+1)})^\top (R_{22,(k)} + \sum_{l=1}^rD_{l,2}^\top Z^{(k,j+1)}D_{l,2}) (K_2^{(k,j)} - K^{(k,j+1)})=0.
\end{align*}
where $\Delta=Z^{(k,j+2)}-Z^{(k,j+1)}$.
Since $K_2^{(k,j+1)} \in \mathcal{K}[A_{(k)},C_{1,(k)},\cdots,C_{r,(k)}; B_2,D_{1,2},\cdots, D_{r,2}]$, and $-(K_2^{(k,j)} - K^{(k,j+1)})^\top (R_{22,(k)} + \sum_{l=1}^rD_{l,2}^\top Z^{(k,j+1)}D_{l,2}) (K_2^{(k,j)} - K^{(k,j+1)}) \preceq 0$, we have $\Delta=Z^{(k,j+2)}-Z^{(k,j+1)} \preceq 0$.

Since the sequence $\{Z^{(k,j+1)}\}_{j\geq1}$ is monotonically decreasing and bounded below by 0, we can thus establish its convergence. Then we have $\lim_{j\rightarrow\infty}Z^{(k,j)}=Z^{(k,*)} $ and $\lim_{j\rightarrow\infty}K_2^{(k,j)}=K_2^{(k,*)}$.
From Theorem 3.9 in \cite{Wang2025}, we get $K_2^{(k,*)} \in \mathcal{K}[A_{(k)},C_{1,(k)},\cdots,C_{r,(k)}; B_2,D_{1,2},\cdots, D_{r,2}]$ and $\lim_{k\rightarrow\infty}P^{(k)}=P^*$, where $P^*$ is the stabilizing solution of the stochastic GTARE \eqref{zslqsdg:gtare}.
\end{proof}

\section{Simulation}
\label{sec:simulation}

Next, we will provide a concrete example to implement the reinforcement learning algorithm.

\begin{example}
Consider the following system matrices, input matrices and cost function parameters for a linear system:

\noindent\textbf{1. System Matrices and Input Matrices}
\[
\begin{aligned}
A &= \begin{bmatrix} 0 & -0.6 \\ 0.6 & -0.3 \end{bmatrix}, \quad
B_1 = \begin{bmatrix} 0.04 \\ 0.02 \end{bmatrix}, \quad
B_2 = \begin{bmatrix} 0.05 \\ 0.03 \end{bmatrix}, \\
\\
C &= \begin{bmatrix} -0.02 & 0.03 \\ -0.05 & 0.02 \end{bmatrix}, \quad
D_1 = \begin{bmatrix} 0.001 \\ 0.01 \end{bmatrix}, \quad
D_2 = \begin{bmatrix} 0.002 \\ 0.02 \end{bmatrix}.
\end{aligned}
\]

\noindent\textbf{2. Cost Function Parameters}
\[
\begin{aligned}
Q &= \begin{bmatrix} 0.5 & 0.0 \\ 0.0 & 0.6 \end{bmatrix}, \quad
S_1 = \begin{bmatrix} 0.01 & 0.02 \end{bmatrix}, \quad
S_2 = \begin{bmatrix} 0.03 & 0.04 \end{bmatrix}, \\
\\
R_{11} &= \begin{bmatrix} -0.2 \end{bmatrix}, \quad
R_{12} = \begin{bmatrix} 0.05 \end{bmatrix}, \quad
R_{21} = \begin{bmatrix} 0.05 \end{bmatrix}, \quad
R_{22} = \begin{bmatrix} 0.3 \end{bmatrix}.
\end{aligned}
\]

The solution obtained using Algorithm \ref{alg1:Nested_Iteration} is
\[
\begin{bmatrix}
1.9397 & -0.3990 \\
-0.3990 & 1.7771
\end{bmatrix}.
\]
The solution obtained using Algorithm \ref{alg2:SemiModelBased} is
\[
\begin{bmatrix}
1.9217 & -0.3965 \\
-0.3965 & 1.7685
\end{bmatrix}.
\]
The solution obtained using Algorithm \ref{alg3:SemiModelBased} is
\[
\begin{bmatrix}
1.9394 & -0.3991 \\
-0.3991 & 1.7780
\end{bmatrix}.
\]
The solution obtained using Algorithm \ref{alg4:ModelFree} is
\[
\begin{bmatrix}
2.0095 & -0.4149 \\
-0.4149 & 1.8217
\end{bmatrix}.
\]
\end{example}

\clearpage
\printbibliography

\end{document}